\documentclass[12pt]{article}
\usepackage[utf8]{inputenc}
\usepackage{geometry}
\usepackage[colorlinks]{hyperref}

\usepackage{graphicx}
\usepackage{tabularx}

\usepackage[bb=boondox]{mathalfa}  

\usepackage{amssymb,amsmath,amsthm}
\usepackage{mathtools}
\usepackage{mathrsfs}  
\usepackage{fancyhdr}
\usepackage{enumitem}
\usepackage{bbm}
\usepackage{dsfont}
\usepackage{pgfplots}
\usepackage{yhmath}
\usepackage{ragged2e}
\usepackage{float}
\usepackage{extarrows}
\usepackage{marginnote}
\pgfplotsset{compat=1.18}

\newcommand{\N}{\mathbb{N}}
\newcommand{\Z}{\mathbb{Z}}
\newcommand{\R}{\mathbb{R}}

\newcommand{\E}{\mathbb{E}}

\newcommand{\Prob}{\mathbb{P}}

\numberwithin{equation}{section}

\newtheorem{theorem}{Theorem}[section] 
\newtheorem{corollary}[theorem]{Corollary}

\newtheorem{lemma}[theorem]{Lemma}

\theoremstyle{definition}

\theoremstyle{remark}
\newtheorem{remark}[theorem]{Remark}

\title{Ergodicity of the voter model with dynamic anti-voter bonds}
 
\author{Jhon Astoquillca \thanks{Institute of Mathematics, Statistics and Computer Science, University of S\~ao Paulo} \and Daniel Valesin \thanks{Department of Statistics, University of Warwick} }
\date{\today}

\begin{document}

\maketitle

\begin{abstract}
The voter model with anti-voter bonds is a variant of the classical voter model in which the edges of the underlying graph are assigned signs. At each update, a voter chooses a neighbour according to a transition kernel; interactions across a positive edge follow the usual voter dynamics, so that a site adopts the current opinion of its chosen neighbour, whereas interactions across a negative edge lead to the adoption of the opposite opinion.

In this work, we introduce a new variant in which the edge signs evolve dynamically according to dynamical percolation with density parameter~$p \in (0,1)$ and speed~$\mathsf v \in (0,\infty)$, where the two states of the process represent positive and negative edges. This defines a joint spin–bond Markov process. Following Liggett’s notion of ergodicity, we prove that this process is ergodic on any simple graph with countably many vertices, with an arbitrary transition kernel of adoption rates and for all choices of the parameters of the edge dynamics.
\end{abstract}



\section{Introduction}
\textbf{Ergodicity.} One of the central objectives in the study of interacting particle systems is to understand their equilibrium structure and long-time behavior. Two fundamental problems naturally emerge in this context. The first is the description of the set of stationary probability measures of the system. Since the state space of such models is typically a compact metric space, the Krein–Milman theorem ensures that this set is determined by its extremal elements, that is, by those stationary measures that cannot be written as non-trivial convex combinations of other stationary measures.

The second fundamental problem is to determine the domain of attraction of each extremal stationary measure. More precisely, given an extremal stationary measure~$\mu$, one seeks to characterize all probability measures~$\nu$ such that the law of the process started from an initial configuration sampled according to~$\nu$ converges weakly to~$\mu$ as~$t \to \infty$.

In this context, we focus on the situation where the system exhibits a form of ``ergodicity'', namely, convergence to a unique stationary state independently of the initial configuration. There is no universal consensus on the precise meaning of ergodicity for interacting particle systems, and terminology varies across the literature. In this work, we follow the convention of Liggett~\cite[Definition 1.9]{Liggett2005} and say that a Markov process on a compact metric space is \emph{ergodic} if
\begin{itemize}
\item it admits a unique stationary measure~$\mu$; and
\item for any probability measure~$\nu$ the law of the process started from~$\nu$ converges weakly to~$\mu$ as~$t \to \infty$.  
\end{itemize} 
\noindent \textbf{The voter model.} In this work, we study a variant of the standard voter model. The voter model is a classical interacting particle system that serves as a simple model for the collective behavior of individuals who continuously update their opinions through local interactions. It was independently introduced by Clifford and Sudbury~\cite{CliffordSudbury73} and by Holley and Liggett~\cite{HolleyLiggett75}, and is treated in detail in the expository texts~\cite{Liggett2005,Swart2017}.

Numerous variations of the voter model have been studied on general graphs. We focus on the voter model with anti-voter bonds dynamics. In this model, the edges of the underlying graph are assigned signs. Interactions across a positive edge follow the usual voter dynamics, whereas interactions across a negative edge lead to the adoption of the opposite opinion. This variant is known to exhibit temporal ergodicity under suitable conditions. We refer to Section~\ref{ss_historical_background} for a discussion of the existing results.

In this work, we introduce a new variant in which the signs of the edges evolve according to a dynamical percolation process. Our main objective is to determine whether the resulting joint spin-bond system is ergodic in the sense of Liggett. We now briefly describe the model in order to state our main result; a formal definition is provided in Section~\ref{ss_the_model}. \\

\noindent \textbf{The voter model with dynamic anti-voter bonds.} Consider a simple graph~$G = (V,E)$ with countable vertex set, and a matrix~$Q$ with entries~$Q = \{q(x,y): x,y \in V\}$ with non-negative off-diagonal entries, satisfying
\begin{equation}\label{eq_adoption_rate_introduction}
q(x,x) = -\sum_{y \in V}q(x,y) = -1 \quad \text{for all } x \in V    
\end{equation}
and also~$q(x,y) = 0$ if~$\{x,y\} \notin E$. We will refer to such a matrix as a \emph{$Q$-matrix} for the graph. Given~$\mathsf v \in (0,\infty)$ and~$p \in (0,1)$, the~\emph{voter model with dynamic anti-voter bonds} on~$G$ with \emph{adoption rate}~$Q$ is a Markov process on~$\{-1,1\}^{V \cup E}$, denoted by~$(\eta_t,\zeta_t)_{t \ge 0}$, where~$\eta_t(x)$ represents the opinion of the voter at~$x$ and~$\zeta_t(e)$ the sign of the bond~$e$, both at time~$t$. We describe the dynamics of the process as follows. 

For~$(\eta,\zeta) \in \{-1,+1\}^{V \cup E}$ and~$(x,e) \in V\times E$, the following jumps can occur for the state of~$(\eta(x),\zeta(e))$, with rates as specified (where~$\mathds 1A$ denotes the indicator function of the event~$A$):
\begin{align*}
(+1,\zeta(e) ) \mapsto (-1,\zeta(e)) \quad & \text{at rate } \sum_{y:y \sim x} q(x,y) \cdot 1 \{ \eta(y) \cdot \zeta(\{x,y\}) = -1 \} \\[2mm]
(-1,\zeta(e) ) \mapsto (+1,\zeta(e)) \quad & \text{at rate } \sum_{y:y \sim x} q(x,y) \cdot 1 \{ \eta(y) \cdot \zeta(\{x,y\}) = +1 \} \\[2mm]
(\eta(x),-1 ) \mapsto (\eta(x),+1) \quad & \text{at rate } p \cdot \mathsf{v} \\[2mm]
(\eta(x),+1 ) \mapsto (\eta(x),-1) \quad & \text{at rate } (1-p) \cdot \mathsf{v}
\end{align*}
In words, the bond process~$\zeta_t(e)$ evolves as a two-state Markov chain independently for each edge~$e$. In the site process, each site (voter)~$x$ at rate~1 chooses a neighbour with probability according to~$\{q(x,y): y \sim x\}$ and adopts the opinion (resp. the opposite opinion) of~$y$ if the edge~$\{x,y\}$ is positive (resp. negative) at that time.

We now state our main result.
\begin{theorem}\label{thm_main_ergo}
The voter model with dynamical anti-voter bonds defined on an arbitrary graph~$G$, with adoption rates given by a~$Q$-matrix satisfying~\eqref{eq_adoption_rate_introduction} and with environment parameters~$p \in (0,1)$ and~$\mathsf v \in (0,\infty)$ is ergodic.    
\end{theorem}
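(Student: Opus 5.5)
Since the bond process $(\zeta_t)$ is autonomous (a product of independent two-state chains), it is ergodic with unique stationary law $\pi_p=\bigotimes_{e\in E}\mathrm{Ber}_{\pm}(p)$. It therefore suffices to show that, whatever the initial $(\eta_0,\zeta_0)$, the joint law converges weakly to the law $\mu$ defined below. I will use a graphical (Harris) construction together with a duality with coalescing random walks carrying signs. Each site $x$ gets a rate-$1$ Poisson clock whose marks come with a neighbour $y$ chosen with probability $q(x,y)$. Each edge carries independent resampling clocks of rate $\mathsf v$, at which the sign is redrawn as $+1$ with probability $p$. Since weak convergence on a product space is determined by cylinder functions, it suffices to show that for every finite $A\subset V$ and $B\subset E$ the joint law of $(\eta_t|_A,\zeta_t|_B)$ converges to a limit not depending on the initial condition.

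\textbf{Duality step.} From $(x,t)$ I trace the ancestral path backward in time. At each site mark $(x,y)$ encountered, the path jumps to $y$ and picks up the factor $\zeta_s(\{x,y\})$. This gives $\eta_t(x)=\eta_0(X^x_t)\prod_i \zeta_{s_i}(e_i)$, and paths from distinct points of $A$ coalesce when they meet. The key observation is that each factor $\zeta_{s_i}(e_i)$ is fresh, i.e.\ independent of the initial condition and of $\zeta_t|_B$, once the edge $e_i$ has had a resampling event in $(0,s_i]$ (respectively in $(s_i,t]$). If the path has crossed some edge $e$ twice, at times $s<s'$, with a resampling event on $e$ between them, then the product contains a Bernoulli factor that is independent of everything else. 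In that case, with $p\ne 1/2$, the factor has mean $2p-1$ rather than $0$, so the argument must use the full product structure.

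\textbf{Convergence and uniqueness.} I will show that, as $t\to\infty$, the joint law of (the coalescence partition of the dual walks from $A$, the product of signs along each lineage) converges in distribution, independently of $\eta_0$ and $\zeta_0$. The dual is a Markov process: the positions of the walkers together with the current signs of finitely many edges they have recently touched (the sign of any untouched edge is stationary-$\pi_p$ at the relevant time, up to an error $e^{-\mathsf v t}$ from $\zeta_0$). This uses stationarity of $\pi_p$ and time reversal of the bond chain. For each lineage there are two cases:
\begin{itemize}[label={}]
\item (a) the lineage makes infinitely many jumps, so the number of fresh $\pm1$ factors tends to infinity. A product of $n$ independent factors with mean $2p-1\in(-1,1)$ has mean $(2p-1)^n\to0$, so $\eta_t(x)$ becomes asymptotically a fair sign, independent of $\eta_0$.
\item (b) jumps are finitely many, which is impossible since the walk jumps at rate $1$.
\end{itemize}
Hence each lineage accumulates infinitely many sign factors almost surely. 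The main obstacle is that these factors are not independent: back-and-forth crossings of the same edge within a short time reuse the same sign and cancel, giving $\zeta^2=1$. To handle this, I would condition on the walk trajectory and on the bond-resampling clocks, and count the edges whose sign factor appears an odd number of times within a resampling block. If such an odd factor is fresh, it contributes mean $2p-1$; more importantly, a fresh factor appearing exactly once is an independent $\pm1$ variable that is not symmetric. So I would instead use a coupling: two copies of the process from different initial data, driven by the same clocks, agree at $(x,t)$ once the lineage's product no longer depends on the initial data. This happens as soon as the first backward factor reaching time $0$ has been replaced; precisely, it suffices that between time $0$ and the first jump of the lineage every edge used after that time has been resampled. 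For the initial $\eta_0$ there is no such luck, so it must be killed by a randomized factor.

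The cleanest route is the following. The event that within the last jump of the lineage before time $0$ (thinking backward, the earliest jump), at time $s$ across edge $e$, the edge $e$ has a resampling event in $(0,s)$ while $e$ is not otherwise used, has probability bounded below uniformly by some $c>0$ over each unit time window. Over $t$ windows, such an event eventually occurs. On that event I can flip that resampling outcome (a measure-preserving involution when $p=1/2$; for general $p$ I would instead compare the configuration to one with an independent coin). Making this step robust for general $p$, by producing a genuinely symmetric or initial-condition-independent randomization despite asymmetric bond marginals, together with the coalescence of lineages from $A$, is the step I expect to be hardest. Uniqueness of the stationary measure then follows immediately from the convergence.
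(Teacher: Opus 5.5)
\paragraph{Main gap: the final randomization step.} You leave the decisive step open, and the obstruction you see for $p\neq 1/2$ is not real: item (a) already had the right mechanism, and you then abandoned it. In the graphical construction the backward lineage is driven by the site clocks alone. So, conditionally on the lineage path and on the edge resampling times, the values of the resampling blocks that begin at a resampling event in $(0,t]$ are i.i.d.\ with law $p\,\delta_{+1}+(1-p)\,\delta_{-1}$. They are independent of $(\eta_0,\zeta_0)$, and also of $\zeta_t|_B$ apart from the finitely many blocks of edges of $B$ still open at time $t$. The conditional mean of $\eta_t(x)$ therefore carries the factor $(2p-1)^{N_t}$, where $N_t$ is the number of such blocks crossed an odd number of times. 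This factor removes the influence of $\eta_0$ and $\zeta_0$ as soon as $N_t\to\infty$, so no measure-preserving flip is needed. If you want an exact coupling, write each fresh value as a fair coin with probability $2\min\{p,1-p\}$ and a fixed sign otherwise. This is exactly the paper's ``standard fact'' that a product of many i.i.d.\ non-degenerate signs is asymptotically fair.

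\paragraph{What actually needs proof: $N_t\to\infty$.} Your dichotomy (a)/(b) does not give this. Infinitely many jumps do not yield infinitely many odd blocks, precisely because of the back-and-forth cancellations you point out. The paper obtains it from infinitely many \emph{one-time crossings}. Given the path, the crossing at $t_i$ is the only use of the freshly revealed edge before it refreshes with conditional probability at least $(1-e^{-\mathsf v})\,\mathds{1}\{\tau_i\ge 1\}$, where $\tau_i$ is the holding time after that crossing. A conditional Borel--Cantelli argument then gives infinitely many such crossings. In your backward, fixed-initial-condition picture you need the two-sided version: a resampling event on each side of the crossing, before the neighbouring crossings of the same edge. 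The argument is otherwise the same.

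\paragraph{Second gap: several sites.} The multi-site statement is asserted rather than proved, and its limit is not a product of fair signs. Once two lineages meet, their relative sign is frozen (what the paper calls synchronization). You must therefore show two things:
\begin{enumerate}
\item the coalescence partition and the synchronization indicators converge jointly with $\zeta_t|_B$;
\item for every nonempty set of distinct classes, the product of their signs still contains infinitely many odd fresh blocks, even though different lineages may cross the same edge within one block.
\end{enumerate}

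\paragraph{Comparison with the paper's route.} The paper avoids the fixed-$\zeta_0$ bookkeeping by proving only uniqueness, since any stationary measure has edge marginal $\pi^{\mathrm{edge}}_p$. It handles several sites by induction on $k$, through a coupling of independent and coalescing dual walks up to their first collision. It then upgrades uniqueness to convergence from arbitrary initial laws with the Maillard--Mountford argument, which rests on Mountford's time-shift coupling. Your direct route, proving convergence from every initial configuration and getting uniqueness as a by-product, is a legitimate alternative that would bypass that last lemma. It works only once the two pieces above are supplied.
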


\subsection{Background: Static scenario}\label{ss_historical_background}
We now briefly review existing results on ergodicity for the voter model with positive and negative bonds fixed over time.

Given a graph~$G$ and a~$Q$-matrix as above, and a configuration~$\zeta \in \{+1,-1\}^E$ assigning positive and negative signs to the edges of~$G$,~\emph{the voter model with anti-voter bonds} is the Markov process~$(\eta_t)_{t \ge 0}$ on~$\{-1,+1\}^V$  whose dynamics are defined analogously to the dynamical version. In this case, however, the signs of the edges are fixed by~$\zeta$ and remain constant throughout the evolution of opinions of the system.

\begin{enumerate}
    \item When~$\zeta$ is the all-positive configuration, that is,~$\zeta(e) = +1$ for all~$e \in E$, we recover the standard voter model. See~\cite{Liggett2005} for an extended description of its stationary measures. In particular, the consensus configurations, in which all voters share the same opinion, are stationary for the model. Hence, the process is not ergodic on any graph~$G$ for any choice of~$Q$-matrix.
    \item When~$\zeta$ is the all-negative configuration, that is,~$\zeta(e) = -1$ for all~$e \in E$, we obtain the~\emph{dissonant voter model or anti-voter model}. This model was introduced in~\cite{Matloff1977}. Consider the Markov chain on~$V \times \{-1,+1\}$ with~$\tilde Q$-matrix defined by
    $$ \tilde Q( (x,a);(y,b) ) = \left\{ \begin{array}{c}
    Q(x,y) \quad \text{if } a \cdot b = +1; \\[2mm]
    0 \hspace{9mm} \text{ otherwise.}
    \end{array} \right. $$
    It was shown in~\cite[Theorem 3.6]{Matloff1977} that the model on any graph~$G$ and~$Q$-matrix is ergodic if and only if the set of functions~$f:V \times \{-1,+1\} \to \R$ that are harmonic with respect to~$\tilde Q$ and satisfy
    $$f(x,-1) + f(x,+1) = 1 \quad \text{ for all } x \in V,$$ 
    are the constant function~$1/2$.
    \item When~$\zeta$ is neither entirely positive nor entirely negative, the authors in~\cite[Theorem 6.1]{Gantertetall2005} showed that for any locally finite and connected graph~$G$ with transition rates~$Q(x,y) = (\deg(x))^{-1} \cdot \mathds{1}\{x \sim y\}$, the model is ergodic if and only if
    $$ \lim_{t \to \infty} \mathbb P( \eta_t(x) = +1 ) = 1/2 \quad \text{for all } x \in V \text{ and } \eta_0 \in \{-1,+1\}^V. $$
    In~\cite[Theorem 1.4]{MaillardMountford2013}, it was further proved that when the model admits a unique stationary probability measure, then it is ergodic.
    \item A random configuration~$\zeta$ of signed bonds was studied in~\cite{Saada95} for~$G=(\Z^d,E(\Z^d))$ and~$Q(x,y) = 1/(2d) \cdot \mathds{1}\{x \sim y\}$. The configuration~$\zeta$ is given by a Bernoulli product measure on~$E(\mathbb{Z}^d)$ with density~$p$. It is shown in Theorem~3 of that article that, for every~$p \in (0,1)$, the voter model with anti-voter bonds determined by~$\zeta$ is almost surely ergodic.
\end{enumerate}



\section{The voter model with dynamic anti-voter bonds}
All graphs considered in this text have non-empty, countable vertex sets and are simple, that is, undirected graphs with no loops and at most one edge between any pair of vertices. Given a graph~$G = (V,E)$, we write~$x \sim y$ when~$\{x,y\} \in E$, and we denote by~$\deg(x)$ the degree of the vertex~$x$. We denote by~$\Omega_\mathrm{site} = \{-1,+1\}^V$ and~$\Omega_\mathrm{edge} = \{-1,+1\}^E$ the space of site and edge configurations, respectively. We then let~$\Omega = \Omega_\mathrm{site} \times \Omega_\mathrm{edge}$ be the joint configuration space.


\subsection{The model}\label{ss_the_model}
In this section, we define a dynamical version of the voter model with anti-voter bonds introduced in~\cite{Gantertetall2005,Matloff1977} by allowing the edges evolve as a dynamical percolation process. \\

\noindent \textbf{Dynamical percolation.} The basic bond percolation model on a graph~$G = (V,E)$ is obtained by fixing $p \in [0,1]$ and sampling the full configuration of edges from 
$$\pi^\mathrm{edge}_p := \bigotimes_{e \in E} \big( p \cdot \delta_{ \{+1\} } + (1-p) \cdot \delta_{ \{-1\} } \big) .$$
In this model, each edge has two possible states,~$+1$ and~$-1$, which encode the sign of the bond. This should be contrasted with the usual percolation framework, where edge states indicate whether an edge is present or absent; we do not follow that interpretation here.

Dynamical percolation is a dynamical variant of this model. It is defined as follows. Given~$p \in [0,1]$ and~$\mathsf v \in (0, \infty)$, each edge in $E$ evolves according to an independent two-state (positive and negative) continuous-time Markov chain with state space~$\{+1,-1\}$ and flips from negative to positive (resp. positive to negative) at rate~$\mathsf v \cdot p$ (resp.~$\mathsf v \cdot (1-p)$).  

The entire configuration of positive and negative edges at time~$t$, denoted by $\zeta_t$, is an element of~$\Omega_\mathrm{edge}$. The dynamics of this Markov process can be encoded in the pre-generator 
$$\mathcal{L}_\mathrm{dp}f(\zeta) = \sum_{e \in E }c(e,\zeta,\mathsf{v},p) \cdot [ f(\zeta^e) - f(\zeta) ],$$
where $f:\Omega_\mathrm{edge} \to \R$ is any function that only depends on finitely many coordinates, and for any~$\zeta~\in~\Omega_\mathrm{edge}$,
$$ \zeta^e(e') = \left\{ \begin{array}{ll}
    \zeta(e') & \text{if } e' \neq e \\[2mm]
    -\zeta(e) & \text{if } e' = e
\end{array} \right. \quad \text{and} \quad c(e,\zeta,\mathsf{v},p) = \left\{ \begin{array}{ll}
    \mathsf{v} \cdot p, & \text{if } \zeta(e)=-1 \\[2mm]
    \mathsf{v} \cdot (1-p), & \text{if } \zeta(e)=+1
\end{array} \right. $$
This process is stationary with respect to $\pi^\mathrm{edge}_p$. \\

\noindent \textbf{The voter model with anti-voter bonds.} Let us consider a graph~$G = (V,E)$ and a function~$Q:V \times V \to \R$ that satisfies
\begin{equation}\label{Q_def_adoption_rw}
\sum_{y \in V}Q(x,y) = 0, \quad Q(x,y) \left\{ \begin{array}{cc}
    \ge 0 & \text{if } x \sim y \\[2mm]
    = 0 & \text{if } x \not \sim y
\end{array} \right.  \; \text{ and } \; Q(x,x) = -1 \; \text{ for all } x \in V.   
\end{equation}
\emph{The voter model with dynamic anti-voter bonds} on~$G$ with \emph{adoption rate}~$Q$ and environment parameters~$p \in [0,1]$ and~$\mathsf{v} \in (0,\infty)$, denoted by~$(\eta_t,\zeta_t)$,~$t \geq 0$, is the Feller process with state space~$\Omega$ and pre-generator
\begin{equation}\label{Markov_pre_dyn}
\mathcal{L}_\mathrm{dyn} f(\eta,\zeta) = \sum_{(x,y) \in V^2: \; x\sim y} Q(x,y) \cdot \big[ f(\eta^{y \to x}_\zeta,\zeta) - f(\eta,\zeta)\big] + \mathcal L_\mathrm{dp}f_\eta(\zeta),
\end{equation}
where $f:\Omega \to \R$ is any function that only depends on finitely many coordinates, the function~$f_\eta:\Omega_\mathrm{edge} \to \R$ is obtained from~$f$ by fixing the first coordinate at~$\eta \in \Omega_\mathrm{site}$, and for any~$(\eta,\zeta) \in \Omega$, 
$$\eta^{y \to x}_\zeta(z) = \left\{ \begin{array}{ll}
    \eta(z) & \text{if } z \neq x;  \\[2mm]
    \eta(y)\cdot \zeta(\{x,y\}) & \text{otherwise} \end{array} \right. $$

For convenience in the calculations, we identify the two possible voter opinions as the states~$+1$ and~$-1$. Unlike the voter model on dynamical percolation introduced in~\cite{Astoquillca26}, no opinion adoption is rejected. If at time~$t$ a voter~$x \in V$ attempts to adopt the opinion of~$y \in V$, it will adopt the actual (resp. opposite) opinion of~$y$ if the edge~$\{x,y\}$ is positive (resp. negative) at that time.

\subsection{Duality}
In the literature on the voter model with anti-voter bonds, random walks with jumping rate according to~$Q$ from~\eqref{Q_def_adoption_rw}, paired with a process that tracks the negative bonds crossed by the walker, play a crucial role in describing the stationary measures of the model.

In this section, we define an analogous process in the setting of a dynamical environment. Throughout this section, we assume that the environment parameters~$p \in (0,1)$ and~$\mathsf v \in (0,\infty)$ are fixed. \\

\noindent \textbf{The sign process and coalescing random walks on dynamical signed edges.} To define the dual process on a graph~$G=(V,E),$ we introduce some notation. Given $z,x,y \in V$ and~$\mathsf i \in \{-1,+1\}$, we define 
\begin{equation*}
     z^{x \to y} := \left\{ \begin{array}{cc}
        y & \text{if } z = x, \\[2mm]
        z & \text{otherwise}
    \end{array} \right. \quad \text{ and } \quad \mathsf  i^{x,z} := \left\{ \begin{array}{cc}
        -\mathsf i & \text{if } z = x,  \\[2mm]
        \mathsf i & \text{otherwise.}
    \end{array} \right. 
\end{equation*}
In words, we update the base from~$z$ and~$\mathsf i$ to~$y$ and~$-\mathsf i$, respectively, only if~$z = x$. Moreover, for any integer~$k \in \N$ and vectors $\mathbf{x} = (x_1,\dots, x_k) \in V^k$ and~$\mathbf i = (\mathsf i_1,\dots, \mathsf i_k) \in \{-1,+1\}^k$ we write 
$$ \mathbf{x}^{x \to y} := (x^{x \to y}_1, \dots, x^{x \to y}_k) \quad \text{and} \quad \mathbf i^{x,\mathbf x} := (\mathsf i^{x,x_1}_1, \dots, \mathsf i^{x,x_k}_k ). $$
Finally, we denote by~$\mathcal{P}_\mathrm{fin}(E)$ the collection of finite subsets of $E$ and write
$$ \mathcal P_\mathrm{fin-disj}(E) := \{ (A,B) \in \mathcal P_\mathrm{fin}(E) \times \mathcal P_\mathrm{fin}(E) : A \cap B = \varnothing \}. $$

We are now ready to define the \emph{dual process}. Fix~$k \in \N$, and consider the continuous-time Markov chain 
$$ (Y^1_t, \dots, Y^k_t,\mathsf i^1_t,\dots, \mathsf i^k_t, A_t,B_t)_{t \ge 0} \quad \text{with state space} \quad \mathcal S_k := V^k \times \{-1,+1\}^k \times \mathcal P_\mathrm{fin-disj}(E)$$ 
and rate matrix~$Q_\mathrm{crw} = Q_\mathrm{crw}(p,\mathsf v,k,Q)$, defined as follows. Let~$(\mathbf y, \mathbf i, A,B) \in \mathcal S_k$ and~$x$ an entry of the vector~$\mathbf y \in V^k$,
\begin{equation}\label{matrix_q_crw_dyn}
    \begin{array}{ll}
       Q_\mathrm{crw} \big( ( \mathbf y,\mathbf i,A,B), ( \mathbf y^{x \to y},\mathbf i,A,B ) \big) = Q(x,y), & \{x,y\} \in A, \; y \in V;  \\[0.2cm]
       Q_\mathrm{crw} \big( ( \mathbf y,\mathbf i,A,B), ( \mathbf y^{x \to y},\mathbf i,A \cup \{x,y\},B ) \big) = p \cdot Q(x,y), & \{x,y\} \in (A \cup B)^c, \; y \in V;
       \\[0.2cm] 
       Q_\mathrm{crw} \big( ( \mathbf y,\mathbf i,A,B), ( \mathbf y^{x \to y},\mathbf i^{x,\mathbf y},A,B ) \big) = Q(x,y), & \{x,y\} \in B, \; y \in V; \\[0,2cm]
       Q_\mathrm{crw} \big( ( \mathbf y,\mathbf i,A,B), ( \mathbf y^{x \to y},\mathbf i^{x,\mathbf y},A,B \cup \{x,y\} ) \big) = (1-p) \cdot Q(x,y), & \{x,y\} \in (A \cup B)^c, \; y \in V; \\[0.2cm]
       Q_\mathrm{crw} \big( ( \mathbf y,\mathbf i,A,B), ( \mathbf y,\mathbf i,A \setminus \{e\},B \setminus \{e\} ) \big) = \mathsf v, & e \in A \cup B.
    \end{array}
\end{equation}
In words, the matrix rate~$Q_\mathrm{crw}$ shows that
\begin{enumerate}
    \item the process~$\mathbf Y_t := (Y^1_t,\dots,Y^k_t), \;t \ge 0$, evolves as a system of coalescing random walks with each walker jumping according to~$Q$. In contrast to other models of random walks on dynamical percolation~\cite{PeresStaufferSteif2015,Andresetall,Astoquillca26}, in this case, no jump is rejected;
    \item the environment process~$(A_t,B_t),\; t \ge 0,$ takes values in~$\mathcal P_\mathrm{fin-disj}(E)$, the pair~$(A_t,B_t)$ represents the set of positive and negative edges, respectively, known to the system of coalescing random walks at time~$t$. The remaining edges are considered unknown. When a random walk traverses such an unknown edge, the status of the edge is revealed by sampling a Bernoulli random variable with parameter~$p$: it is declared positive with probability~$p$, and negative with probability~$1-p$. 
    Note that the environment process can be started from some initial knowledge of the environment~$(A_0,B_0) \in \mathcal{P}_\mathrm{fin-disj}(E)$. A closely related process is defined in Section~4.2 of~\cite{Astoquillca26} in the setting where edges evolve between absent and present states.
    \item the sign process~$\mathbf i_t := (\mathsf i^1_t,\dots, \mathsf i^k_t), \;t \ge 0,$ takes values in~$\{-1,+1\}^k$. Each coordinate is associated to a random walk in~$(\mathbf Y_t)$ and flips its sign whenever the corresponding random walk crosses a negative edge. Its relevance, particularly for readers unfamiliar with the static model~\cite{Matloff1977, Saada95, Gantertetall2005, MaillardMountford2013}, will become clear in the discussion of the duality relation below. Analogously to the environment process, the sign process may start with either a positive or a negative sign.  
\end{enumerate}

We now prove that the Markov chain~$(\mathbf Y_t,\mathbf i_t, A_t,B_t)_{t \ge 0}$ and the voter model with dynamic anti-voter bonds satisfy a duality relationship. In order to state such a relationship, we add a bit more of notation. 

Given~$(\mathbf{x},\mathbf{i},A,B) \in \mathcal S_k$, we denote by~$P_{\mathbf{x},\mathbf{i},A,B}$ a probability measure under which~$(\mathbf Y_t,\mathbf i_t,A_t,B_t)_{t \ge 0}$ is defined and satisfies~$P_{\mathbf{x},\mathbf{i},A,B}( (\mathbf Y_0,\mathbf i_0,A_0,B_0) = (\mathbf{x}, \mathbf{i}, A ,B) ) = 1$. The expectation operator associated with this measure is denoted by~$E_{\mathbf x, \mathbf i, A,B}$.

Given~$(\eta,\zeta) \in \Omega$, we denote by~$P_{\eta,\zeta}$ a probability measure under which~$(\eta_t,\zeta_t)_{t \ge 0}$ is defined and satisfies~$P_{\eta,\zeta}( (\eta_0,\zeta_0) = (\eta, \zeta)) = 1$. The expectation operator associated with this measure is denoted by~$E_{\eta,\zeta}$.

\begin{lemma}\label{lemma_duality_raw}
Fix an arbitrary $k \in \N$, a graph~$G$ and a function~$Q$ satisfying~\eqref{Q_def_adoption_rw}. Then, the voter model with dynamic anti-voter bonds $(\eta_t,\zeta_t)_{t \ge 0}$ with adoption rate~$Q$ and environment parameters~$p$ and~$\mathsf v$, and the Markov chain~$(\mathbf Y_t,\mathbf i_t,A_t,B_t)_{t \ge 0}$ with matrix rate~$Q_\mathrm{crw}(p,\mathsf v,k,Q)$ are dual with respect to the function~$D = D_k:\Omega \times \mathcal S_k \to \R$ defined as
$$D(\eta,\zeta,\mathbf x,\mathbf i,A,B) = \varphi_\mathrm{site}(\eta,\mathbf x, \mathbf i)  \cdot \varphi_\mathrm{edge}(\zeta,A,B),$$
    where
    \begin{equation}\label{psi_varphi_def}
        \begin{aligned}
        & \varphi_\mathrm{site}(\eta,\mathbf x, \mathbf i) = \mathds{1}\{ \eta(x_1) = \mathsf i_1, \dots, \eta(x_k) = \mathsf i_k \} \; \text{ and} \\[2mm]
        & \varphi_\mathrm{edge}(\zeta,A,B) = p^{-|A|}(1-p)^{-|B|} \cdot \mathds{1}\{\zeta \equiv 1 \text{ on } A,\; \zeta \equiv -1 \text{ on } B\},
        \end{aligned}
    \end{equation}
    that is, for any~$(\eta,\zeta,\mathbf{x},\mathbf i,A,B) \in \Omega \times \mathcal S_k$ we have that
    \begin{equation}\label{duality_equality_raw}
        E_{\eta,\zeta}[D(\eta_t,\zeta_t,\mathbf{x},\mathbf{i},A,B)] = E_{\mathbf{x},\mathbf{i},A,B}[D(\eta,\zeta,\mathbf Y_t, \mathbf i_t,A_t,B_t)].
    \end{equation}
\end{lemma}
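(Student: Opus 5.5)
The plan is the standard generator criterion for duality (e.g.\ \cite[Theorem 3.42]{Liggett2005} or the analogous statement in \cite{Swart2017}). Write $\mathcal L_\mathrm{dyn}$ for the generator of $(\eta_t,\zeta_t)$ acting on the first argument of $D$, and $Q_\mathrm{crw}$ for the generator of the dual chain acting on the second. It then suffices to verify
\begin{equation*}
\mathcal L_\mathrm{dyn} D(\cdot,\mathbf x,\mathbf i,A,B)(\eta,\zeta) = Q_\mathrm{crw} D(\eta,\zeta,\cdot)(\mathbf x,\mathbf i,A,B)
\end{equation*}
for all arguments, together with an integrability condition. For fixed $(\mathbf x,\mathbf i,A,B)$, the function $D(\cdot,\mathbf x,\mathbf i,A,B)$ depends only on the finitely many coordinates $x_1,\dots,x_k$ and $A\cup B$, so it lies in the core and $\mathcal L_\mathrm{dyn}$ applies directly. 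On the dual side, the total jump rate out of a state is at most $k+\mathsf v(|A|+|B|)$, and $|A_t|+|B_t|$ grows at most at rate $k$. Hence the dual is non-explosive, and $D$ is bounded by $p^{-|A_t|}(1-p)^{-|B_t|}$. This quantity has finite expectation uniformly on compact time intervals by comparison with a Poisson process of rate $k$. That bound supplies the domination needed to differentiate $s\mapsto E[\,\cdot\,]$ in the usual interpolation argument $\frac{d}{ds}E_{\eta,\zeta}E_{\mathbf x,\mathbf i,A,B}[D(\eta_s,\zeta_s,\mathbf Y_{t-s},\dots)]=0$.

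The generator identity splits into an edge part and a site part. For the edge part, $\mathcal L_\mathrm{dp}$ only affects $\varphi_\mathrm{edge}$, and only flips of edges $e\in A\cup B$ matter. Take $e\in A$ and write $A'=A\setminus\{e\}$. I will check the single-edge identity
\begin{equation*}
\mathsf v\big[(1-p)\mathds 1\{\zeta(e)=1\}(0-p^{-1})+p\mathds 1\{\zeta(e)=-1\}(p^{-1}-0)\big]=\mathsf v\big[1-p^{-1}\mathds 1\{\zeta(e)=1\}\big],
\end{equation*}
which is $\mathsf v[\varphi_\mathrm{edge}(\zeta,A',B)-\varphi_\mathrm{edge}(\zeta,A,B)]$ after multiplying by the factors from the other edges. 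The case $e\in B$ is symmetric. This matches the last line of \eqref{matrix_q_crw_dyn}.

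For the site part, fix a walker position $x$ among the entries of $\mathbf x$ and a neighbour $y$. On the voter side, only the updates $\eta\mapsto\eta^{y\to x}_\zeta$ at sites occupied by walkers change $\varphi_\mathrm{site}$, and they occur at rate $Q(x,y)$. After such an update, $\eta(x)$ becomes $\eta(y)\zeta(\{x,y\})$. So $\varphi_\mathrm{site}$ evaluated at the new configuration equals $\varphi_\mathrm{site}(\eta,\mathbf x^{x\to y},\mathbf i')$, where $\mathbf i'=\mathbf i$ on $\{\zeta(\{x,y\})=1\}$ and $\mathbf i'=\mathbf i^{x,\mathbf x}$ on $\{\zeta(\{x,y\})=-1\}$. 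Here all walkers at $x$ move together and flip together, which is consistent with coalescence. Multiplying by $\varphi_\mathrm{edge}(\zeta,A,B)$ gives three cases according to whether $\{x,y\}$ lies in $A$, in $B$, or in $(A\cup B)^c$.
\begin{itemize}
\item If $\{x,y\}\in A$, the indicator forces $\zeta(\{x,y\})=1$, which matches the first line of \eqref{matrix_q_crw_dyn}.
\item If $\{x,y\}\in B$, the indicator forces $\zeta(\{x,y\})=-1$, which matches the third line.
\item If $\{x,y\}\notin A\cup B$, split $1=\mathds 1\{\zeta=1\}+\mathds 1\{\zeta=-1\}$ on that edge and use $\varphi_\mathrm{edge}(\zeta,A,B)\mathds 1\{\zeta(\{x,y\})=1\}=p\,\varphi_\mathrm{edge}(\zeta,A\cup\{\{x,y\}\},B)$, and similarly with $1-p$ for $B$. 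This reproduces exactly the second and fourth lines, with weights $pQ(x,y)$ and $(1-p)Q(x,y)$.
\end{itemize}
The diagonal terms match on both sides because both chains have total rate $Q(x,y)$ for the move $x\to y$. Summing over distinct positions $x$ among the $x_i$ and over $y$ gives the identity.

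The algebra is routine. I expect the main obstacle to be the justification step: the dual state space is countable but the rates are unbounded in $|A|+|B|$, and $D$ is unbounded. I will handle this with the Poisson domination described above. If needed, I will instead invoke a duality theorem that requires only $E\sup_{s\le t}|D|<\infty$, which the same bound provides.
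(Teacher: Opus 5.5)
Your proposal is correct and follows the paper's proof. The paper uses the same generator criterion for duality, the same refresh computation giving rate $\mathsf v$ for removing $e\in A\cup B$, and the same three-way split of the site term according to whether $\{x,y\}$ lies in $A$, in $B$, or in $(A\cup B)^c$, with $\varphi_\mathrm{edge}(\zeta,A,B)\mathds 1\{\zeta(\{x,y\})=1\}=p\,\varphi_\mathrm{edge}(\zeta,A\cup\{\{x,y\}\},B)$ producing the $p$ and $1-p$ weights. The paper simply invokes Theorem~3.42 of Liggett's 2010 book (not the 1985 one you cite) and does not address the unboundedness of $D$ in the dual variable, so your Poisson-domination argument for non-explosion and integrability is a useful extra justification rather than a departure.
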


\begin{proof}
This proof is similar in spirit to the proof of duality in Lemma~4.11 of~\cite{Astoquillca26}. It is included here for completeness.

We write~$D_{\eta,\zeta}(\mathbf{x},\mathbf{i},A,B) = D_{\mathbf{x},\mathbf{i},A,B}(\eta,\zeta) =D(\eta,\zeta,\mathbf{x},\mathbf{i},A,B)$. Theorem~3.42 of~\cite{Liggett2010} states that a sufficient condition for~\eqref{duality_equality_raw} is to prove that for all~$(\eta,\zeta,\mathbf x, \mathbf i, A,B) \in \Omega \times \mathcal S_k$,
\begin{equation}\label{duality_to_proof}
         \mathcal L_\mathrm{dyn} D_{\mathbf{x},\mathbf{i},A,B}(\eta,\zeta) = \mathcal L_\mathrm{coal-dyn}D_{\eta,\zeta}(\mathbf{x},\mathbf{i},A,B),
\end{equation}
where~$\mathcal L_\mathrm{dyn}$ is the Markov pre-generator~\eqref{Markov_pre_dyn} and~$\mathcal L_\mathrm{coal-dyn}$ is the generator of the Markov chain~\eqref{matrix_q_crw_dyn}.
    
Fix a configuration~$(\eta,\zeta, \mathbf{x}, \mathbf{i},A,B) \in \Omega\times \mathcal S_k$ and sites~$x,y \in V$ such that~$x \sim y$. Note that for any~$z \in V$,
\begin{equation}\label{eta_zeta_eq}
    \eta^{y \to x}(z) = \eta(z^{x \to y}).
\end{equation}
We consider the following three cases associated with the edge~$\{x,y\} \in E$. \\[1mm]

\noindent \textbf{Positive edge}, that is,~$\{x,y\} \in A$. In either case~$\zeta(\{x,y\})=+1$ or~$\varphi_\mathrm{edge}(\zeta,A,B) = 0$, using~\eqref{eta_zeta_eq}, we have that
    \begin{equation}\label{duality_proof_1}
        D_{\mathbf{x}, \mathbf{i}, A ,B} (\eta^{y \to x}_\zeta, \zeta) - D_{\mathbf{x}, \mathbf{i}, A ,B}(\eta, \zeta) = D_{\eta,\zeta}( \mathbf{x}^{x \to y}, \mathbf{i}, A,B ) - D_{\eta,\zeta}( \mathbf{x}, \mathbf{i}, A,B );
    \end{equation} 

\noindent \textbf{Negative edge}, that is,~$\{x,y\} \in B$. In either case~$\zeta(\{x,y\})=-1$ or~$\varphi_\mathrm{edge}(\zeta,A,B) = 0$, using~\eqref{eta_zeta_eq}, we have that
    \begin{equation}\label{duality_proof_2}
        D_{\mathbf{x}, \mathbf{i}, A ,B}(\eta^{y \to x}_\zeta, \zeta) - D_{\mathbf{x}, \mathbf{i}, A ,B}(\eta, \zeta) =  D_{\eta,\zeta}( \mathbf{x}^{x \to y}, \mathbf i^{x,\mathbf x}, A,B ) - D_{\eta,\zeta}( \mathbf{x}, \mathbf{i}, A,B);        
    \end{equation}

\noindent \textbf{Unknown edge}, that is,~$\{x,y\} \in (A \cup B)^c$. First, assuming~$\zeta(\{x,y\}) = +1$, we have
    \begin{align*}
    D_{\mathbf{x}, \mathbf{i}, A ,B}(\eta^{y \to x}_\zeta, \zeta) & \stackrel{\eqref{psi_varphi_def},\eqref{eta_zeta_eq}} = p \cdot \varphi_\mathrm{edge}(\zeta, A \cup\{x,y\}, B) \cdot \varphi_\mathrm{site}(\eta,\mathbf x^{x \to y},\mathbf i)  \\[2mm]
    & \hspace{5mm} = p \cdot D_{\eta,\zeta}(\mathbf x^{x \to y},\mathbf i, A \cup \{x,y\}, B); 
    \end{align*}
    otherwise, if~$\zeta(\{x,y\})=-1$, analogously, we have
    $$D_{\mathbf{x}, \mathbf{i}, A ,B}(\eta^{y \to x}_\zeta, \zeta) = (1-p) \cdot D_{\eta,\zeta}(\mathbf{x}^{x \to y}, \mathbf i^{x,\mathbf{x}}, A, B \cup \{x,y\}). $$    
    Hence,
    \begin{equation}\label{duality_proof_3}
        \begin{aligned}
            D_{\mathbf{x}, \mathbf{i}, A ,B}(\eta^{y \to x}_\zeta, \zeta)& -  D_{\mathbf{x}, \mathbf{i}, A ,B}(\eta, \zeta) \\[2mm]
            & = p \cdot \big[ D_{\eta,\zeta}(\mathbf x^{x \to y},\mathbf i, A \cup \{x,y\}, B) - D_{\eta,\zeta}(\mathbf x,\mathbf i, A, B)  \big] \\[0.2cm]
            & \hspace{3mm} + (1-p) \cdot \big[ D_{\eta,\zeta}(\mathbf x^{x \to y},\mathbf i^{x,\mathbf x}, A, B\cup \{x,y\}) - D_{\eta,\zeta}(\mathbf x,\mathbf i, A, B)  \big].
        \end{aligned}
    \end{equation}

Together~\eqref{duality_proof_1},~\eqref{duality_proof_2} and~\eqref{duality_proof_3}, multiplying by~$Q(x,y)$ and summing over~$(x,y) \in V^2$ with~$x \sim y$ we obtain that
    $$ \sum_{(x,y) \in V^2: x \sim y } Q(x,y) \cdot \big[ D_{\mathbf{x},\mathbf{i},A,B}(\eta^{y \to x}_\zeta,\zeta) - D_{\mathbf{x},\mathbf{i},A,B}(\eta,\zeta)\big] $$
    equals 
    \begin{equation}\label{duality_proof_sum_site}
        \begin{aligned} 
            & \sum_{(x,y) \in V^2 : x \sim y } \big( Q(x,y) \cdot \big[ D_{\eta,\zeta}(\mathbf{x}^{x \to y},\mathbf{i}, A,B) - D_{\eta,\zeta}(\mathbf{x},\mathbf{i},A,B) \big] \\[0.2cm]
            & \hspace{2.5cm} + Q(x,y) \cdot \big[ D_{\eta,\zeta}(\mathbf{x}^{x \to y},\mathbf{i}^{x,\mathbf{x}}, A,B) - D_{\eta,\zeta}(\mathbf{x},\mathbf{i},A,B) \big] \\[0.2cm]
            & \hspace{2.5cm} + p \cdot Q(x,y) \cdot \big[ D_{\eta,\zeta}(\mathbf{x}^{x \to y},\mathbf{i}, A\cup\{x,y\},B) - D_{\eta,\zeta}(\mathbf{x},\mathbf{i},A,B) \big] \\[0.2cm]
            & \hspace{2.5cm} + (1-p) \cdot Q(x,y) \cdot \big[ D_{\eta,\zeta}(\mathbf{x}^{x \to y},\mathbf{i}^{x,\mathbf x}, A,B\cup\{x,y\}) - D_{\eta,\zeta}(\mathbf{x},\mathbf{i},A,B) \big] \big).
        \end{aligned}
    \end{equation}
    On the other hand, it is clear that
    \begin{equation}\label{duality_proof_sum_edge}
        \begin{aligned}
        \sum_{e \in E}  c(e,\zeta,\mathsf{v},p)  \cdot \big[  D_{\mathbf{x},\mathbf{i},A,B} &(\eta,\zeta^e) - D_{\mathbf{x},\mathbf{i},A,B}(\eta,\zeta) \big] \\
        & = \sum_{e \in E} \mathsf{v} \cdot \big[ D_{\eta,\zeta}( \mathbf{x},\mathbf{i}, A \setminus \{e\}, B \setminus \{e\}) - D_{\eta,\zeta}(\mathbf{x},\mathbf{i}, A, B) \big].
        \end{aligned}
    \end{equation}
    Therefore, the duality relationship~\eqref{duality_to_proof} follows by summing~\eqref{duality_proof_sum_site} to both sides of~\eqref{duality_proof_sum_edge}.
\end{proof}
An immediate consequence of the duality relationship is the following
\begin{corollary}
Fix~$k \in \N$. Then, for any configuration~$(\eta,\zeta,\mathbf x, \mathbf i,A,B ) \in \Omega \times \mathcal S_k$, with~$\mathbf x = (x_1,\dots,x_k)$ and~$\mathbf i = (\mathsf i_1,\dots, \mathsf i_k)$, and any time~$t \ge 0$,
\begin{equation}\label{duality_equality}
    \begin{aligned}
    P_{\eta,\zeta}( \eta_t(x_1)  =\mathsf i_1,\dots \eta_t(x_k) &=  \mathsf i_k, \zeta_t \equiv 1 \text{ on } A, \zeta_t \equiv -1 \text{ on } B ) \\[0.2cm]
    & = p^{|A|}(1-p)^{|B|} \cdot E_{\mathbf{x},\mathbf{i},A,B}[D(\eta,\zeta,\mathbf Y_t,\mathbf i_t,A_t,B_t) ].        
    \end{aligned}
\end{equation}
\end{corollary}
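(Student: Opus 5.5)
The identity follows directly from Lemma~\ref{lemma_duality_raw}, applied with the same $k$ and the same point $(\eta,\zeta,\mathbf x,\mathbf i,A,B)\in\Omega\times\mathcal S_k$. The plan is to evaluate the left-hand side of \eqref{duality_equality_raw} explicitly and then rearrange.

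\textbf{Left-hand side.} By the definition \eqref{psi_varphi_def}, for every configuration $(\eta',\zeta')$,
\[
D(\eta',\zeta',\mathbf x,\mathbf i,A,B)=p^{-|A|}(1-p)^{-|B|}\,\mathds 1\{\eta'(x_j)=\mathsf i_j\ \forall j,\ \zeta'\equiv 1 \text{ on } A,\ \zeta'\equiv -1\text{ on } B\}.
\]
The prefactor $p^{-|A|}(1-p)^{-|B|}$ is deterministic. It depends only on the fixed sets $A,B$, not on $(\eta',\zeta')$, so it comes out of the expectation. Setting $(\eta',\zeta')=(\eta_t,\zeta_t)$ therefore gives
\[
E_{\eta,\zeta}[D(\eta_t,\zeta_t,\mathbf x,\mathbf i,A,B)]=p^{-|A|}(1-p)^{-|B|}\,P_{\eta,\zeta}\big(\eta_t(x_j)=\mathsf i_j\ \forall j,\ \zeta_t\equiv1\text{ on }A,\ \zeta_t\equiv-1\text{ on }B\big).
\]
This probability is finite-dimensional and hence well defined, because $A$ and $B$ are finite.

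\textbf{Rearranging.} Equate this with the right-hand side of \eqref{duality_equality_raw}. Since $p\in(0,1)$, the factor $p^{|A|}(1-p)^{|B|}$ is a finite positive constant, so we may multiply both sides by it. The result is \eqref{duality_equality}.

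\textbf{Remaining check.} The only point needing care is that the expectation on the right of \eqref{duality_equality_raw} is finite, so the rearrangement is legitimate. This holds because $D(\eta,\zeta,\mathbf Y_t,\mathbf i_t,A_t,B_t)$ is nonnegative and is bounded by $p^{-|A_t|}(1-p)^{-|B_t|}$. The sets $A_t,B_t$ only grow by edges crossed by walkers, so $|A_t|+|B_t|\le |A|+|B|+N_t$, where $N_t$ is the number of jumps of the $k$ walkers. Each walker jumps at total rate at most $1$, so $N_t$ is dominated by a Poisson variable with mean $kt$. A Poisson variable has exponential moments of all orders, so $E[c^{N_t}]<\infty$ for every $c>0$; taking $c=\max(p^{-1},(1-p)^{-1})$ bounds the expectation, which is therefore finite. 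In any case the equality of the lemma already asserts this finiteness implicitly, so the corollary is immediate.
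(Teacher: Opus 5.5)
Your proposal is correct and is exactly the argument the paper has in mind when it calls this an immediate consequence of Lemma~\ref{lemma_duality_raw}: pull the deterministic factor $p^{-|A|}(1-p)^{-|B|}$ out of the left-hand side of \eqref{duality_equality_raw} and multiply through by $p^{|A|}(1-p)^{|B|}$. Your extra finiteness check is valid but unnecessary, since the left-hand side of the lemma is already bounded by $p^{-|A|}(1-p)^{-|B|}$.
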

This duality relationship shows, as in the classical voter model, that the dynamics of the voters can be understood in terms of the ancestry of opinions. Tracing the adoption of an opinion backwards in time, the location of the ancestral opinion evolves as a random walk with jump rate~$Q$ on the underlying graph. Each time the backward trajectory crosses a negative edge, a sign flip is recorded, accounting for the effect of anti-voter interactions. The term involving~$p$ arises from the reversibility of the edge environment with respect to the measure~$\pi^\mathrm{edge}_p$.

When the ancestry of several sites is traced simultaneously, one obtains a system of random walks evolving backwards in time that coalesce upon meeting, reflecting the fact that once two sites share a common ancestor, their subsequent histories coincide. Along each backward path, the accumulated product of edge signs determines whether the opinion is transmitted or reversed. As a consequence, the joint distribution of opinions at a fixed time can be expressed in terms of the meeting structure of these coalescing random walks and the parity of sign changes accumulated along their trajectories. 

\begin{remark}\label{rmk_coalescing_synchronization}
Let~$(Y^1_s)$ and~$(Y^2_s)$ be two different random walks in~$(\mathbf Y_s)$ that coalesce at time~$t$. We say that their associated sign processes \emph{synchronize} if~$\mathsf i^1_t = \mathsf i^2_t$. Since the random walks have coalesced, they traverse the same edges from~$t$ onward. Consequently, the sign processes evolve identically from that time on, and therefore
$$\mathsf i^1_s = \mathsf i^2_s \quad \text{ for all } s \ge t.$$ 
By the same reasoning, if synhcronization does not occur, then~$\mathsf i^1_s \neq \mathsf i^2_s$ for all~$s \ge t$. In particular, in the absence of synchronization, the event~$\{ \eta( Y^1_s) = \mathsf i^1_s, \eta(Y^2_s) = \mathsf i^2_s \}$ is empty for all~$s \ge t$.
\end{remark}

\subsection{Stationary measures}
In this section, we use the duality relationship developed in the previous section to obtain a stationary measure of the voter model with dynamic anti-voter bonds on a graph~$G$ with adoption rate~$Q$ and environment parameters~$p \in (0,1)$ and~$\mathsf v$. In Section~\ref{ss_ergodicity} we prove that this is the unique stationary measure of the process.  

\begin{remark}\label{rmk_stationary_static_degenerate}
For the degenerate cases~$p \in \{0,1\}$, it is straightforward to verify that, for any underlying graph and any adoption rate, a measure~$\mu$ is stationary for the voter model on the static all-positive or all-negative signed environment if and only if the product measure~$\mu \otimes \pi^\mathrm{edge}_p$ is stationary for the voter model with dynamic anti-voter bonds.
\end{remark}

\noindent \textbf{A stationary measure.} Let~$\nu$ be the Bernoulli product measure on~$\Omega_\mathrm{site}$ of density parameter~$1/2$. Given~$k \in \N$, we have that for any configuration~$(\mathbf x, \mathbf i, A,B) \in \mathcal{S}_k$ with~$\mathbf i = (1,\dots,1)$ and any time~$t \ge 0$,
\begin{equation}\label{eq_a_stationary_measure_0}
    \begin{aligned}
    \int P_{\eta,\zeta}( \eta_t(x_1 &) =  1, \dots, \eta_t(x_k) = 1,  \zeta_t \equiv 1 \text{ on } A, \zeta_t \equiv -1 \text{ on } B ) \; \nu \otimes \pi^\mathrm{edge}_p ( \mathrm{d}(\eta,\zeta) ) \\[0.2cm]
    & = p^{|A|}(1-p)^{|B|} \cdot \int E_{\mathbf{x},\mathbf{i},A,B}[D(\eta,\zeta,\mathbf Y_t,\mathbf i_t,A_t,B_t)] \; \nu \otimes \pi^\mathrm{edge}_p ( \mathrm{d}(\eta,\zeta) ) \\[0.2cm]
    & = p^{|A|}(1-p)^{|B|} \cdot E_{\mathbf{x},\mathbf{i},A,B} \big[\nu \big(\eta: \eta(Y^1_t) = \mathsf i^1_t,\dots,\eta(Y^k_t)= \mathsf i^k_t \big) \big],
    \end{aligned}
\end{equation}
where the first equality follows from the duality relationship~\eqref{duality_equality} and the second one by applying Fubini's theorem.

We define the following set of indices and events:
\begin{align*}
& \mathcal C_n := \{ m \in \{1,\dots,k\} : \exists t \ge 0 \text{ such that } Y^m_t =Y^n_t \};  \\[0.2cm]
& I_n := \{ \forall m \in \mathcal C_n \; \exists \; T \ge 0 \text{ such that } \mathsf i^m_t = \mathsf i^n_t \text{ for all } t \ge T  \}, \quad n = 1,\dots,k.
\end{align*}
In words, the set~$\mathcal C_n$ consists of the indices of random walks that coalesce with~$(Y^n_t)_{t \ge 0}$, while~$I_n$ is the event that their corresponding sign processes synchronize with that of~$(Y^n_t)_{t \ge 0}$. See Remark~\ref{rmk_coalescing_synchronization}. 

Note that~$\{\mathcal C_1,\dots,\mathcal C_k\}$ forms a partition of the set~$\{1,\dots,k\}$, determined by the coalescence structure of the random walks~$(Y^1_t,\dots,Y^k_t)_{t \ge 0}$. Also observe that the event~$\bigcap_{n=1}^k I_n$ corresponds to the case where, within each coalesced group, the associated sign processes are synchronized.

From this observation, it is easy to see that
\begin{equation}\label{eq_a_stationary_mesure}
    \lim_{t \to \infty} E_{\mathbf{x},\mathbf{i},A,B} \big[\nu \big(\eta(Y^1_t) = \mathsf i^1_t,\dots,\eta(Y^k_t)= \mathsf i^k_t \big) \big] = E_{\mathbf{x},\mathbf{i},A,B} \Big[ \Big(\frac{1}{2}\Big)^{| \{\mathcal C_1,\dots, \mathcal C_k\} |} \cdot \prod^k_{n=1} \mathds{1}_{I_n} \Big].
\end{equation}
Therefore, taking the limit as~$t \to \infty$ in~\eqref{eq_a_stationary_measure_0} and applying~\eqref{eq_a_stationary_mesure}, we conclude that under~$\nu \otimes \pi^\mathrm{edge}_p$, the process~$(\eta_t,\zeta_t)$ converges in distribution to a probability measure~$\mu^\mathrm{dyn}$ on~$\Omega$ characterized by
\begin{equation}\label{mu_dyn_def}
    \begin{aligned}
        \mu^\mathrm{dyn} \big( (\eta,\zeta): \eta(x_1)= 1, \dots & , \eta(x_k)= 1 ,  \zeta \equiv 1 \text{ on } A ,  \zeta \equiv -1 \text{ on } B\big) \\
        & = p^{|A|}(1-p)^{|B|} \cdot E_{\mathbf{x},\mathbf{i},A,B} \Big[ \Big(\frac{1}{2}\Big)^{| \{\mathcal C_1,\dots, \mathcal C_k\} |} \cdot \prod^k_{n=1} \mathds{1}_{I_n} \Big]
    \end{aligned}
\end{equation}
for any~$(\mathbf x, \mathbf i, A,B) \in \mathcal S_k$ with~$\mathbf i = (1,\dots,1)$. As the measure~$\mu^\mathrm{dyn}$ is obtained as a distributional limit of~$(\eta_t,\zeta_t)$, it is stationary for this process. This follows from the Feller property of the process, as stated in~\cite[Proposition I.1.8.(d)]{Liggett2005}.

\section{Ergodicity}\label{ss_ergodicity}
As mentioned in the Introduction, we follow Liggett’s notion of ergodicity. We now introduce some notation and restate this definition in a formal way. Let~$(\xi_t)_{t \ge 0}$ be a Markov process with state space~$\mathcal C$, a compact metric space. For a probability measure~$\mu$ on~$\mathcal C$ and a time~$t \ge 0$, we denote by~$\mu S_t$ the distribution of~$\xi_t$ under~$P_\mu$. Then~$(\xi_t)_t$ is ergodic if:
\begin{enumerate}[label=(\Alph*)]
    \item it has a unique stationary probability measure~$\mu$; and
    \item for any probability measure~$\nu$ in~$\mathcal C$,~$\nu S_t$ converges weakly to~$\mu$ as~$t \to \infty$. 
\end{enumerate}

We say that a Markov process has the Feller property, and refer to it as a~\emph{Feller process}, if
$$ \xi \mapsto \E_\xi[ f(\xi_t) ] \quad \text{is continuous for every continuous } f:\mathcal C \to \R \text{ and } t \ge 0.$$
It is well known that, for Feller processes, condition (B) implies condition (A). However, the converse implication does not hold in general. As a counterexample, one may consider the example given in~\cite{Liggett2005}: a deterministic process that moves at uniform speed around the unit circle. In this case, the normalized Lebesgue measure on the circle is the unique stationary probability measure, yet there exists a sequence~$t_n \to \infty$ such that for any probability measure~$\mu$, we have~$\mu S_{t_n}=\mu$ for all~$n$. Hence, convergence to the Lebesgue measure fails.

In contrast, for certain Feller processes, including the one studied here, condition~(A) does imply condition~(B). We establish this implication in Section~\ref{ss_uniq_implies_ergo}. Consequently, to prove that the voter model with dynamic anti-voter bonds is ergodic, it suffices to show that the measure~$\mu^\mathrm{dyn}$, defined in~\eqref{mu_dyn_def}, is the unique stationary measure of the process. This is proved in Section~\ref{ss_uniqueness} by means of a coupling between the dual coalescing system and an auxiliary process consisting of independent random walks combined with dynamic signs and edge explorations, introduced in Section~\ref{ss_ind_rw_coupling}.

Throughout this section, we fix a graph~$G = (V,E)$ and a function~$Q$ satisfying~\eqref{Q_def_adoption_rw}, as well as the environment parameters~$p \in (0,1)$ and~$\mathsf v \in (0,\infty)$. As noted in Remark~\ref{rmk_stationary_static_degenerate}, the study of ergodicity in the degenerate cases~$p \in \{0,1\}$ reduces to the corresponding static environments. A review of these results can be found in the Introduction. 

\subsection{Uniqueness implies ergodicity}\label{ss_uniq_implies_ergo}
We follow the approach of Theorem~1.4 in~\cite{MaillardMountford2013} to show that the uniqueness of the stationary measure implies ergodicity. We extend this approach to more general Feller processes defined on metric spaces with a dual, which is a Markov chain with a countable state space. Define
\[
\mathscr{I}_\infty := \left\{\begin{array}{l}\mu:  \text{there exists a probability measure $\nu$ on $\mathcal C$ and a sequence $(t_n)_{n \ge 0}$}\\
\text{with $t_n \to \infty$ such that $\nu S_{t_n}$ converges weakly to $\mu$ as $n \to \infty$}\end{array}\right\}.
\]
\begin{lemma}\label{lemma_uniqueness_1}
Let~$(\xi_t)_{t \ge 0}$ be a Feller process defined on a metric space~$\mathcal C$ and~$(Z_t)_{t \ge 0}$ be a Markov chain with a countable state space~$\mathcal S$ and~$Q
$-matrix~$\{q(x,y) : x,y \in \mathcal S\}$ that satisfy
\begin{equation}\label{eq_q_finite}
c(x) > 0, \; \forall x \in \mathcal S, \quad
\sup_{ \substack{ x,y \in \mathcal S \\ q(x,y) > 0 } } \frac{c(x)}{c(y)} < \infty, \quad \int^\infty_0 \mathds{1}_{ \{Z_{t^-} \neq Z_t\} } \cdot c(Z_t)^{-2} \; \mathrm{d}t = \infty \; a.s.,
\end{equation}
where~$c(x) = -q(x,x)$. Assume that the processes are dual with respect to a function~$D:\mathcal C \times \mathcal S \to [0,\infty)$, that is,
$$ E_\xi D(\xi_t,z) = E_z D(\xi_t,z) \quad \text{ for every } \xi \in \mathcal C,\; z \in \mathcal S,\; t \ge 0; $$
also assume that for each~$z \in \mathcal S$,
\begin{enumerate}[label=(\roman*)]
    \item the map~$\xi \mapsto D(\xi,z)$ is bounded and continuous; and
    \item there exit~$M>0$ and~$\alpha>0$ such that for each~$\xi \in \mathcal C$ and~$t \ge 0$ we have that $E_z [D(\xi,Z_t)^{1+\alpha}] \leq M$.
\end{enumerate} 
Moreover, assume that 
$\mu \in \mathscr{I}_\infty$. Then,
\begin{equation}\label{eq_v_stat_unique}
    \int D(\xi,z)\; \mu( \mathrm{d} \xi ) = \int D(\xi,z) \; \mu S_t( \mathrm{d} \xi ) \quad \text{for any } t \ge 0 \text{ and } z \in \mathcal S.
\end{equation}
\end{lemma}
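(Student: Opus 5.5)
The plan is to reduce \eqref{eq_v_stat_unique} to a statement about the dual chain alone: the law of $Z_s$ is asymptotically insensitive to a deterministic time shift. Fix $\nu$ and $t_n\to\infty$ with $\nu S_{t_n}\to\mu$ weakly, and fix $z\in\mathcal S$ and $t\ge 0$.

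\textbf{Reduction to the dual chain.} Since the process is Feller, $\nu S_{t_n+t}=(\nu S_{t_n})S_t\to\mu S_t$ weakly. Since $\xi\mapsto D(\xi,z)$ is bounded and continuous by (i), it suffices to show
$$\lim_{n\to\infty}\Big(\int D(\cdot,z)\,\mathrm d\nu S_{t_n+t}-\int D(\cdot,z)\,\mathrm d\nu S_{t_n}\Big)=0 .$$
Integrating the duality relation against $\nu$ and using Fubini gives $\int D(\cdot,z)\,\mathrm d\nu S_s=E_z[g(Z_s)]$, where $g(y):=\int D(\xi,y)\,\nu(\mathrm d\xi)$. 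By Jensen and (ii), $E_z[g(Z_s)^{1+\alpha}]\le M$ uniformly in $s$. Truncating at level $K$ then gives
$$|E_z g(Z_{s+t})-E_z g(Z_s)|\le K\,\big\|P_z(Z_{s+t}\in\cdot)-P_z(Z_s\in\cdot)\big\|_{TV}+2MK^{-\alpha}.$$
So it is enough to prove that this total variation distance tends to $0$ as $s\to\infty$, for each fixed $t$, and then let $K\to\infty$.

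\textbf{Total variation via a rescaling of holding times.} Write $Z$ through its embedded jump chain $(X_j)$ and holding times $\tau_j=e_j/c(X_j)$, with $(e_j)$ i.i.d.\ standard exponentials independent of $(X_j)$. Let $T_m=\sum_{j<m}\tau_j$ be the jump times, so $Z_s=X_{N(s)}$ with $N(s)=\max\{m:T_m\le s\}$. Conditionally on $(X_j)$, I would build the law of $Z_{s+t}$ from a perturbation of the same sequence. Replace $\tau_j$ by $(1+\varepsilon_j)\tau_j$, with $\varepsilon_j:=t\,c(X_j)^{-1}/\Sigma_m$ for $j<m$, where $\Sigma_m:=\sum_{j<m}c(X_j)^{-2}$. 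This lengthens $T_m$ by approximately $\sum_j\varepsilon_j/c(X_j)=t$ in mean, so the rescaled process at time $s+t$ is approximately the original at time $s$.

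The cost of the rescaling is controlled by the relative entropy of the product of exponential laws: $\sum_j O(\varepsilon_j^2)=O(t^2/\Sigma_m)$. The third condition in \eqref{eq_q_finite} says exactly that $\Sigma_m\to\infty$ almost surely along the jumps, so this cost vanishes; Pinsker then converts it into total variation. There is a residual mismatch, because $\sum_j\varepsilon_j\tau_j$ equals $t$ only up to fluctuations of size $t/\sqrt{\Sigma_m}$. I would absorb it by a small additional shift of the last holding time, or by a second layer of rescaling. To make this rigorous I would fix a large deterministic level $L$ and choose $m$ to be the stopping index at which $\Sigma_m$ first exceeds $L$. Since the $\varepsilon_j$ depend only on $(X_j)_{j<m}$, they are predictable, and a conditional Girsanov/entropy bound still applies.

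\textbf{Main obstacle.} The hard part is making this rescaling argument uniform. Two things must be checked: that the index $m$ is reached before time $s$ with probability tending to $1$ as $s\to\infty$, and that the perturbed holding times after index $m$ can be left untouched. The quantities $\varepsilon_j$ must also stay small. This is where I would use the bounded ratio $\sup c(x)/c(y)$ over $q(x,y)>0$: it gives $c(X_j)^{-2}\le C\,c(X_{j+1})^{-2}$, so each increment of $\Sigma$ is comparable to its neighbours. Hence $\Sigma_m$ cannot overshoot $L$ by much, and $\max_j\varepsilon_j\lesssim t\,\max_j c(X_j)^{-1}/\Sigma_m\to 0$. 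Combining these pieces yields $\|P_z(Z_{s+t}\in\cdot)-P_z(Z_s\in\cdot)\|_{TV}\le O(t/\sqrt L)+o_s(1)$, and letting $s\to\infty$ and then $L\to\infty$ completes the proof.
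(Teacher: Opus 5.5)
Your reduction is correct and is the same as the paper's. The Feller property gives $\nu S_{t_n+t}\to\mu S_t$, duality and Fubini turn both integrals into $E_z[g(Z_s)]$, and your truncation at level $K$ does the job of the paper's Hölder step. The paper then gets $\|P_z(Z_{s+t}\in\cdot)-P_z(Z_s\in\cdot)\|_{TV}\to0$ from Mountford's time-shift coupling (Lemma~\ref{lemma_couopling_X+T}).

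Your self-contained replacement for that step has a gap, exactly where you flag it. The rescaling shifts $T_m$ by $R=\sum_{j<m}\varepsilon_j\tau_j$, not by $t$. The two paths agree after $T_m$ only if the shift is exactly $t$. Your estimate of the mismatch is not scale invariant:
\begin{itemize}
\item Given the jump chain, $R-t$ has standard deviation $t\big(\sum_{j<m}c(X_j)^{-4}\big)^{1/2}/\Sigma_m$, which is a time, not a dimensionless number.
\item Absorbing a shift $\delta$ into a holding time of rate $c$ costs about $c|\delta|$ in total variation.
\end{itemize}
For the last holding time this cost need not be small. Take the pure birth chain $j\to j+1$ at rate $c(j)=\sqrt{j+1}$, started at $0$; it satisfies all three conditions in \eqref{eq_q_finite}. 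Here $\Sigma_m$ is a harmonic sum, so $\log m\approx L$. Then $R-t$ is of order $t/L$, while $c(X_{m-1})=\sqrt m$ is of order $e^{L/2}$, so the correction costs almost $1$. The ``second layer of rescaling'' is not specified, so the bound $O(t/\sqrt L)$ is asserted rather than proved.

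The step is easy to repair without any rescaling. Conditionally on $(X_j)$, $T_m=\sum_{j<m}\tau_j$ is a sum of independent exponentials, so its density $f$ is log-concave. Hence $f$ is unimodal with $\|f\|_\infty\le \mathrm{Var}(T_m)^{-1/2}=\Sigma_m^{-1/2}\le L^{-1/2}$. This gives $\frac12\int|f(x-t)-f(x)|\,\mathrm{d}x\le t\|f\|_\infty\le t/\sqrt L$. Now build the coupling in three steps:
\begin{itemize}
\item couple $T_m+t$ maximally with a variable $S'$ having the law of $T_m$;
\item draw the second copy's first $m$ holding times from their conditional law given that their sum is $S'$;
\item reuse $\tau_j$ for $j\ge m$.
\end{itemize}
This yields two copies of $Z$ with $Z^2_{u+t}=Z^1_u$ for all $u\ge T_m$, outside an event of probability at most $t/\sqrt L$. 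That is exactly your bound $t/\sqrt L+P_z(T_m>s)$. Alternatively, cite Mountford as the paper does.

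Incidentally, for fixed $z$ you do not need the bounded-ratio condition. The bound $\max_j\varepsilon_j\le t/\sqrt{\Sigma_m}$ holds simply because $\max_{j<m}c(X_j)^{-2}\le\Sigma_m$, and overshooting $L$ only helps.
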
  
The proof of this lemma is postponed to the end of this section. We now state the following immediate consequence.
\begin{corollary}\label{cor_uniqueness_implies_ergodicity}
Let~$(\xi_t)_{t \ge 0}$ be a Feller process on a metric space~$\mathcal C$. Let~$I$ be an index set and, for each~$i \in I$, let~$(Z^i_t)_{t \ge 0}$ be a Markov chain with countable state space~$\mathcal S_i$ and~$Q$-matrix satisfying~$\eqref{eq_q_finite}$. Suppose that, for every~$i \in I$, the processes~$(\xi_t)$ and~$(Z^i_t)$ are dual with respect to~$D_i:\mathcal C \times \mathcal S_i \to [0,\infty)$, which satisfies conditions~$(i)$ and~$(ii)$ of Lemma~\ref{lemma_uniqueness_1} for all~$z \in \mathcal S_i$.
\begin{enumerate}
    \item[(1)] If, in addition, the family of functions~$\xi \mapsto D_i(\xi,z)$, indexed by~$i \in I$ and~$z \in \mathcal S_i$, is measure-determining (that is, two probability measures coincide whenever their integrals against all these functions coincide), then every measure in~$\mathscr{I}_\infty$ is stationary for~$(\xi_t)_t$.
    \item[(2)] If, furthermore, the metric space~$\mathcal C$ is compact and~$(\xi_t)_t$ admits a unique stationary measure, then $(\xi_t)_t$ is ergodic.
\end{enumerate}
\end{corollary}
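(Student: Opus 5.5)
Part (1) comes directly from Lemma~\ref{lemma_uniqueness_1}. Fix $\mu \in \mathscr I_\infty$ and $t \ge 0$. For each $i \in I$ and $z \in \mathcal S_i$, the lemma applied to the dual pair $(\xi_t)$, $(Z^i_t)$ with duality function $D_i$ gives
$$\int D_i(\xi,z)\,\mu(\mathrm d\xi) = \int D_i(\xi,z)\,\mu S_t(\mathrm d\xi).$$
The hypotheses of the lemma hold by assumption: \eqref{eq_q_finite} holds and $D_i$ satisfies (i) and (ii). So $\mu$ and $\mu S_t$ agree on the measure-determining family $\{D_i(\cdot,z)\}$, hence $\mu S_t = \mu$. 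Since $t$ was arbitrary, $\mu$ is stationary.

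For part (2), let $\bar\mu$ be the unique stationary measure and let $\nu$ be any probability measure on $\mathcal C$. Since $\mathcal C$ is compact, the family $\{\nu S_t : t \ge 0\}$ is tight, so by Prokhorov's theorem every sequence $t_n \to \infty$ has a subsequence along which $\nu S_{t_n}$ converges weakly to some $\mu$. By definition this $\mu$ lies in $\mathscr I_\infty$. By part (1) it is stationary, and by uniqueness $\mu = \bar\mu$.

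Thus every subsequential limit of $\nu S_t$ as $t \to \infty$ equals $\bar\mu$. The standard subsequence argument then gives $\nu S_t \Rightarrow \bar\mu$: otherwise some continuous $f$ and $\varepsilon>0$ would have $|\int f\, d\nu S_{t_n} - \int f\, d\bar\mu| \ge \varepsilon$ along a sequence $t_n \to \infty$, and extracting a convergent subsequence gives a contradiction. So (B) holds, and together with the assumed uniqueness (A) this is ergodicity.

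There is no real obstacle here, since the analytic work sits in Lemma~\ref{lemma_uniqueness_1}. The one point to check is that the lemma's hypotheses hold separately for each index $i$. This is exactly what the corollary assumes, so the lemma can be applied index by index.
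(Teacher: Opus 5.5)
Your proof is correct and matches the paper's argument. Part (1) applies Lemma~\ref{lemma_uniqueness_1} index by index and uses the measure-determining property to get $\mu S_t=\mu$. Part (2) uses compactness and Prokhorov's theorem to show that every subsequential limit of $\nu S_{t_n}$ lies in $\mathscr{I}_\infty$ and hence equals the unique stationary measure, then concludes with the standard subsequence criterion.
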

\begin{remark}
By Proposition~I.1.8(d) in~\cite{Liggett2005}, every measure in
\[
\left\{\begin{array}{l}\mu:  \text{there exists a probability measure $\nu$ on $\mathcal C$}\\
\text{such that $\nu S_{t}$ converges weakly to $\mu$ as $t \to \infty$}\end{array}\right\}.
\]
is stationary for~$(\xi_t)_t$. Note that the above set is contained in~$\mathscr{I}_\infty$.
\end{remark}
\begin{proof}[Proof of Corollary~\ref{cor_uniqueness_implies_ergodicity}] Part~(1) is immediate from the measure-determining property applied to~\eqref{eq_v_stat_unique}, which implies that, for any~$\mu \in \mathscr{I}_\infty$, we have~$\mu = \mu S_t$ for all~$t \ge 0$. Then,~$\mu$ is stationary. For part~(2), let~$\mu$ denote the unique stationary measure. By part~(1) we then have~$\mathscr{I}_\infty = \{\mu\}$. Fix a measure~$\nu$ in~$\mathcal C$ and an increasing sequence~$(t_n)_{n \ge 0}$ with~$t_n \to \infty$. It remains to show that the measures~$\nu S_{t_n}$ converge weakly to~$\mu$ as~$n \to \infty$. 

Recall a standard fact: a sequence of probability measures on a metric space converges to a probability measure~$\kappa$ if every subsequence has a further subsequence that converges to~$\kappa$. To apply this criterion, we use the compactness of~$\mathcal C$ and apply Prokhorov's theorem to obtain a subsequential limit, which belongs to~$\mathscr{I}_\infty$. Hence, the sequence converges to~$\mu$, and the proof is complete.
\end{proof}
We now apply this corollary to our model to establish a key result needed for the proof of the main theorem.
\begin{lemma}\label{lemma_unique_implies_ergo}
Consider the voter model with dynamical anti-voter bonds defined on an arbitrary graph~$G$, with adoption rates given by a~$Q$-matrix satisfying~\eqref{Q_def_adoption_rw}, and with environment parameters~$p \in (0,1)$ and~$\mathsf v \in (0,\infty)$. If this process has a unique stationary measure, then it is ergodic.
\end{lemma}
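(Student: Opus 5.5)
The plan is to apply part~(2) of Corollary~\ref{cor_uniqueness_implies_ergodicity}. We take $\mathcal C = \Omega$, which is compact in the product topology, and the index set $I = \N$. For each $k \in I$ the dual is the chain $(\mathbf Y_t,\mathbf i_t,A_t,B_t)$ on the countable space $\mathcal S_k$ with rates $Q_\mathrm{crw}(p,\mathsf v,k,Q)$, and the duality function is $D_k$ from Lemma~\ref{lemma_duality_raw}. The Feller property of $(\eta_t,\zeta_t)$ is part of the definition of the model. Duality itself is exactly Lemma~\ref{lemma_duality_raw}. It then remains to check three things: condition~\eqref{eq_q_finite}, conditions (i)–(ii) of Lemma~\ref{lemma_uniqueness_1}, and the measure-determining property.

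\textbf{Conditions (i), (ii) and measure-determining.} For fixed $z=(\mathbf x,\mathbf i,A,B)$, the map $(\eta,\zeta)\mapsto D_k(\eta,\zeta,z)$ is a constant times the indicator of a cylinder event. It is therefore bounded and continuous, which gives (i).

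For (ii), note that $D_k \le p^{-|A_t|}(1-p)^{-|B_t|} \le c_p^{-|A_t|-|B_t|}$ with $c_p=\min(p,1-p)$. So it suffices to bound $E_z[c_p^{-(1+\alpha)N_t}]$ uniformly in $t$, where $N_t=|A_t\cup B_t|$. We dominate $N_t$ by an immigration–death process. New edges are added at total rate at most $k$, since each walker jumps at rate $1$. Each known edge is forgotten at rate $\mathsf v$. Hence $N_t \preceq |A\cup B| + \mathrm{Poisson}(k/\mathsf v)$ in the stochastic order, which gives a uniform bound on exponential moments of every order. This yields (ii) for any $\alpha>0$, with a constant $M$ depending on $z$.

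For the measure-determining property, the sets $\{\eta(x_j)=1\ \forall j,\ \zeta\equiv1 \text{ on } A,\ \zeta\equiv-1 \text{ on } B\}$, together with the variants using general signs $\mathbf i$, form a $\pi$-system of cylinder sets. This $\pi$-system generates the product $\sigma$-algebra. Two probability measures agreeing on it therefore coincide.

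\textbf{Condition \eqref{eq_q_finite}, the main obstacle.} The total rate is $c(\mathbf y,\mathbf i,A,B) = \sum_{j}(1 - \text{(self-jumps)}) + \mathsf v\,|A\cup B|$. Coalesced walkers sitting at the same site should count once; to be safe, bound it between positive constants times $1+|A\cup B|$.

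This is positive. In one transition $|A\cup B|$ changes by at most one, so the ratio $c(x)/c(y)$ across a single jump is bounded by a constant depending only on $k$ and $\mathsf v$.

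For the integral condition, I would argue as follows. Since $N_t$ is dominated by the immigration–death process above, it returns infinitely often to a bounded level, by positive recurrence of that dominating chain. At such times $c(Z_t)^{-2}$ is bounded below. Moreover, there are infinitely many jumps at those times, because the walkers keep jumping at rate comparable to $1$.

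The worry here is that the condition is written with the indicator $\mathds 1\{Z_{t^-}\neq Z_t\}$ inside a $\mathrm{d}t$ integral, which as literally written is zero. I would interpret it as a sum over jump times, $\sum_{\text{jumps } \tau} c(Z_\tau)^{-2}$, or as the non-explosion-type condition that the original proof needs. Under that reading, infinitely many jumps with $c(Z_\tau)\le C$ force divergence.

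A further subtlety to handle is the case where some walker sits at a vertex with $Q(x,\cdot)$ concentrated on a single neighbour. Since $Q(x,x)=-1$ always, each walker still jumps at rate $1$, so jumps never stop. If the rates $Q$ are arbitrary, some care is needed in defining $c$ consistently, but the bounds above hold with constants depending only on $k$ and $\mathsf v$.

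Once all hypotheses are verified, uniqueness of the stationary measure together with part~(2) of Corollary~\ref{cor_uniqueness_implies_ergodicity} gives ergodicity.
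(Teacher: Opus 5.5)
Your proposal is correct and follows the paper's proof essentially step by step. Like the paper, you apply part~(2) of Corollary~\ref{cor_uniqueness_implies_ergodicity} with the duals $Q_\mathrm{crw}(p,\mathsf v,k,Q)$ and the functions $D_k$, obtain (ii) and the divergence condition from stochastic domination of $|A_t\cup B_t|$ by an immigration--death chain, and read the integral in \eqref{eq_q_finite} as a sum over jump times, which is how the paper implicitly uses it. Your immigration rate $k$, rather than the paper's rate $1$, is in fact the accurate choice when $k$ walkers are present.
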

\begin{proof}
Recall the processes and the duality function~$D_k$ appearing in Lemma~\ref{lemma_duality_raw}. It is clear that the family
$$\{D_k(\; \cdot \;,\mathbf x, \mathbf{i}, A, B ): k \in \N, (\mathbf x, \mathbf i, A,B) \in \mathcal S_k \}$$
is measure-determining. Consequently, by Corollary~\ref{cor_uniqueness_implies_ergodicity}, it suffices to verify that the duality function~$D_k$ satisfies the properties stated in Lemma~\ref{lemma_uniqueness_1}. 

Fix~$k \in \N$. We start by verifying that the dual process~$(\mathbf Y_t,\mathbf i_t,A_t,B_t)_{t \ge 0}$ from~\eqref{matrix_q_crw_dyn} satisfies~\eqref{eq_q_finite}. Note that
$$ c( \mathbf x, \mathbf i, A,B ) = \mathsf v \cdot |A \cup B| + | \{ x_1,\dots,x_k \} | \ge 1, \quad (\mathbf x, \mathbf i, A, B) \in \mathcal S_k.$$
On the other hand, we have that
$$c( \mathbf x, \mathbf i, A,B ) \leq \max\{2,1+\mathsf v \} \cdot (\mathbf x', \mathbf i', A',B') \quad \forall \; Q_\mathrm{crw}\big( (\mathbf x, \mathbf i, A,B),(\mathbf x', \mathbf i', A',B') \big) > 0. $$
To verify the last property, we focus on the evolution of the positive and negative edges revealed by the random walk, we observe that~$\big(|A_t \cup B_t|\big)_{t \ge 0}$ can be stochastically dominated by a birth–death process~$(R_t)_{t \ge 0}$ on~$\mathbb{N}_0$ with transition rates
$$ k \to k+1 \quad \text{at rate } 1, \quad k \to k-1 \quad \text{at rate } \mathsf v k \quad \text{for every } k \in \N.$$
From this, it follows that the set~$\mathcal R = \{t: A_{t^-} \cup B_{t^-} \neq A_t \cup B_t = \varnothing\}$ is infinite almost surely, and then
$$ \int^\infty_0 \mathds{1}_{ \{ Z_{t^-} \neq Z_t \} } \cdot c(Z_t)^{-2} \; \mathrm{d}t \ge \sum_{t \in \mathcal{R}} c(Z_t)^{-2} \ge k \cdot |\mathcal R|.  $$

We now verify that~$D_k$ satisfies Property~$(i)$ and~$(ii)$. Property~$(i)$ is immediate; we focus on property~$(ii)$. For any~$(\eta,\zeta) \in \Omega$,~$(\mathbf x,\mathbf i, A,B) \in \mathcal S_k$ and~$t \ge 0$, 
$$ E_{\mathbf x, \mathbf i, A,B}[D_k( \eta,\zeta,\mathbf Y_t,\mathbf i_t, A_t,B_t )^2] \leq E_{\mathbf x,\mathbf i,A,B}[ p^{-2|A_t|}(1-p)^{-2|B_t|} ] \leq E_{\mathbf x, \mathbf i, A, B}[ e^{c_p \cdot |A_t \cup B_t|} ].$$
where~$c_p = -2\ln(\min\{p,1-p\})>0$. We now use the stochastic comparison with~$(R_t)$. Note that for every~$\theta \in \R$ and initial condition~$R_0 \in \N$,
$$ \E[ \exp(\theta \cdot |R_t| ) ] = (1- e^{-\mathsf v t} + e^{-\mathsf v t} e^\theta)^{R_0} \cdot \exp \left\{ \frac{1-e^{-\mathsf v t}}{\mathsf v}(e^\theta -1 ) \right\} \xrightarrow[]{t \to \infty} \exp \{(e^\theta-1)/\mathsf v \}. $$
Therefore, taking~$\theta = c_p$ and~$R_0 = |A \cup B|$, we conclude that there exists a constant~$M = M(p,\mathsf v,A,B)$ such that~$(ii)$ holds with~$\alpha = 1$. This completes the proof.
\end{proof}

We now prove Lemma~\ref{lemma_uniqueness_1}. To this end, we recall the following ``time-shift'' coupling from~\cite[Proposition~1.1]{Mountford93}. Observe that~\eqref{eq_q_finite} implies conditions~A, B, and~C of the article and then the conclusion follows.

\begin{lemma}\label{lemma_couopling_X+T}
Consider a Markov chain with a countable state space~$\mathcal{S}$ and~$Q$-matrix~$\{q(x,y): x,y \in \mathcal S\}$ that satisfy~\eqref{eq_q_finite}. Given~$t \in (0,\infty)$ and~$\epsilon>0$, there exists a finite~$s_0$ so that for any~$z \in \mathcal S$, there exists a coupling on~$\Prob$ of two Markov chains~$(Z^1_s)_{s \ge 0}$ and~$(Z^2_s)_{s \ge 0}$ with~$Q$-matrix~$\{q(x,y): x,y \in \mathcal S\}$ started both from~$z$ such that
$$ \Prob( Z^1_s = Z^2_{s+t} \text{ for all } s \ge s_0 ) > 1-\epsilon. $$
\end{lemma}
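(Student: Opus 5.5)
The plan is to rebuild the time-shift coupling of \cite[Proposition~1.1]{Mountford93} directly, by making the two chains follow the \emph{same} sequence of visited states and coupling only their holding times. Let $(x_n)_{n \ge 0}$ be a single realisation of the embedded jump chain started at $x_0 = z$, with transition probabilities $q(x,y)/c(x)$. Both $(Z^1_s)$ and $(Z^2_s)$ will visit exactly the states $x_0, x_1, \dots$ in this order. Their holding times at $x_n$ are $E_n/c(x_n)$ and $E'_n/c(x_n)$ respectively, where $E_n$ and $E'_n$ are $\mathrm{Exp}(1)$ and are coupled conditionally on the past. Write $T^1_n$ and $T^2_n$ for the $n$-th jump times and set $\Delta_n := T^2_n - T^1_n$. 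Then $Z^1_s = Z^2_{s+t}$ for all $s \ge s_0$ as soon as $\Delta_n = t$ for some $n$ with $T^1_n \le s_0$, provided all later pairs are taken equal, $E_m = E'_m$ for $m \ge n$. Each marginal is then a chain with $Q$-matrix $q$, so the task is to make $\Delta$ hit the value $t$ exactly.

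The coupling rule at step $n$, given $\Delta_n$ and $x_n$, is as follows; write $d := t - \Delta_n$.
\begin{itemize}
\item If $|d| \le 1/c(x_n)$, take a maximal coupling of $E'_n/c(x_n)$ with $E_n/c(x_n) + d$. Since $\|\mathrm{Exp}(c) - (\mathrm{Exp}(c) + d)\|_{TV} = 1 - e^{-c|d|}$, this gives $\Delta_{n+1} = t$ with probability at least $e^{-1}$.
\item Otherwise, take $E_n$ and $E'_n$ independent. The increment $(E'_n - E_n)/c(x_n)$ is then symmetric, with variance $2c(x_n)^{-2}$ and exponential tails at scale $c(x_n)^{-1}$.
\end{itemize}
Consequently $(\Delta_n)$, until it hits $t$, is a martingale with conditional variances $2c(x_n)^{-2}$. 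The last condition in \eqref{eq_q_finite} says that $\sum_n c(x_n)^{-2} = \infty$ almost surely. By the Dubins--Schwarz time change, $\Delta$ then behaves like a Brownian motion run for infinite time, so it returns infinitely often to within distance $C\,c(x_n)^{-1}$ of $t$; the exponential tails control the overshoot.

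The bounded-ratio condition $\sup_{q(x,y)>0} c(x)/c(y) < \infty$ ensures that the target window $1/c(x_{n+1})$ at the next step is comparable to the fluctuation scale $1/c(x_n)$ of the current step. Hence, whenever $\Delta$ lands within $C\,c(x_n)^{-1}$ of $t$, there is a uniformly positive probability (depending only on the ratio bound) of an exact hit within one or two steps. A geometric-trials argument along the infinitely many approaches then yields $\Delta_n = t$ for some finite $n$ almost surely.

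The main obstacle will be uniformity of $s_0$ in the starting state $z$. An almost-sure hit is not enough: I need $\Prob(T^1_N \le s_0) > 1 - \epsilon$, where $N$ is the hitting index and $s_0$ depends only on $t$ and $\epsilon$. My first attempt would be a scaling argument. On the event that the quadratic variation $\sum_{n < N} c(x_n)^{-2}$ accumulated before the hit is at most $K t^2$, the elapsed real time $\sum_{n < N} E_n/c(x_n)$ should be controllable. Concretely, the number of steps spent at rate scale $\lambda$ is of order $(t\lambda)^2$, which costs real time of order $t^2\lambda$; when $\lambda$ is large this must be balanced against the fact that $\Delta$ then moves only slowly. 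I would try to partition time into epochs of length $t$ and show that within each epoch there is a probability, bounded below uniformly in the current state, that the hitting procedure succeeds. The bounded-ratio condition should make this uniform, and iterating over roughly $\log(1/\epsilon)$ epochs would give $s_0$. If this uniform per-epoch bound cannot be extracted, I would fall back on checking carefully that \eqref{eq_q_finite} implies conditions~A, B and~C of \cite{Mountford93}, and invoke Proposition~1.1 there as stated.
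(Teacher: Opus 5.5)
Your fallback (check that \eqref{eq_q_finite} gives conditions A--C of \cite{Mountford93} and quote Proposition~1.1 there) is exactly what the paper does; the paper gives no reconstruction. Your primary route breaks at the step you yourself single out. A single $s_0$ valid for every starting state $z$ cannot be extracted from \eqref{eq_q_finite}, by an epoch argument or otherwise, because that uniform version is false.

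Take $\mathcal S=\N\times\Z$ with $q\big((j,m),(j,m+1)\big)=2^j$ and no other jumps. Then $c>0$, the ratio in \eqref{eq_q_finite} is identically $1$, and every jump contributes $4^{-j}$ to the sum, so all three conditions hold. Started from $z=(j,0)$, any coupling has $Z^i_s=(j,N^i_s)$ with $N^1,N^2$ Poisson processes of rate $2^j$. Hence $\Prob(Z^1_{s_0}=Z^2_{s_0+t})\le 1-\|\mathrm{Poi}(2^js_0)-\mathrm{Poi}(2^j(s_0+t))\|_{\mathrm{TV}}\to 0$ as $j\to\infty$, for every fixed $s_0$. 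This is precisely your own scaling computation (real time of order $t^2\lambda$ at rate scale $\lambda$). The ratio bound only relates neighbouring states and does not stop the chain from living at an arbitrarily high rate scale, so no per-epoch success probability can be bounded below uniformly in the state.

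Your criterion $T^1_N\le s_0$ is also too crude at the other end. If $c(z)$ is tiny, $T^1_1=E_0/c(z)$ is huge even when the very first step hits, although then $Z^1_s=Z^2_{s+t}$ for all $s\ge 0$. A sharper sufficient condition is $\max\{T^1_{N-1},T^2_{N-1}-t\}\le s_0$.

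What you should aim for is the statement with $z$ fixed: show the coupling succeeds almost surely, so $T^1_N<\infty$ a.s., and take $s_0=s_0(z,t,\epsilon)$ to be a $(1-\epsilon)$-quantile of $T^1_N$. This is all that is used in the proof of Lemma~\ref{lemma_uniqueness_1}, where $z$ is fixed. Even there, two steps need repair.

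First, writing $R$ for the supremum in \eqref{eq_q_finite}, that condition only gives $c(x_{n+1})\ge c(x_n)/R$. So the next window $1/c(x_{n+1})$ is at most $R$ times the current scale, but it may be much smaller. Your claim that an overshoot of order $1/c(x_n)$ falls into the next window with probability bounded below needs the reverse bound $c(x_{n+1})\le R'c(x_n)$. That reverse bound holds for the dual chain of Lemma~\ref{lemma_duality_raw}, since its rates are at least $1$ and change by a bounded amount per jump. It is not part of \eqref{eq_q_finite}.

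Second, the Dubins--Schwarz picture should be replaced by a discrete argument, conditional on the path $(x_n)$, that $\Delta$ enters the band $\{|\Delta_n-t|\le K/c(x_n)\}$ infinitely often. One way is to combine two facts. One-sided martingale convergence is incompatible with $\sum_n c(x_n)^{-2}=\infty$ when the increments are symmetric Laplace. And each crossing of $t$ has an $\mathrm{Exp}(c(x_n))$ overshoot, by memorylessness. Conditional Borel--Cantelli on the maximal-coupling attempts then gives an exact hit.
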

\begin{proof}[Proof of Lemma~\ref{lemma_uniqueness_1}]
Fix~$z \in \mathcal S$ and a probability measure~$\nu$ on~$\mathcal C$ such that~$\nu S_t$ converges weakly to~$\mu$ as~$t \to \infty$. By the duality relationship, we have that
\begin{equation}\label{eq_ergo_uniqueness_0}
E_\xi D(\xi_{t_n},z) =  E_z D(\xi,Z_{t_n}) \quad \text{for any } \xi \in \mathcal C \text{ and } n \in \N.    \end{equation}
We integrate both sides with respect to~$\nu$ and take the limit as~$n \to \infty$. On the left-hand side, assumption $(i)$ and the Feller property imply that
$$\int D(\xi,z) \; \mu( \mathrm{d}\xi ) = \lim_{n \to \infty} \int E_\xi D(\xi_{t_n},z) \; \nu( \mathrm{d}\xi ) \stackrel{\eqref{eq_ergo_uniqueness_0}} = \lim_{n \to \infty} \int E_z D(\xi,Z_{t_n}) \; \nu(\mathrm{d} \xi).$$
Note that assumption~$(ii)$ implies that~$\int D(\xi,z) \; \mu(d\xi) < \infty$. Now, we want to prove that for any~$t \ge 0$,
\begin{equation}\label{eq_final_meta_theorem}
    \lim_{n \to \infty} \int E_z D(\xi,Z_{t_n + t}) \; \nu(d\xi) = \int D(\xi,z) \; \mu(d \xi).
\end{equation}
This is sufficient since, by first applying the Markov property and then the duality relation, we obtain
$$ \int D(\xi,z) \; \mu S_t(\mathrm{d}\xi) = \lim_{n \to \infty} \int D(\xi,z) \; \nu S_{t_n+t}(\mathrm{d}\xi) = \lim_{n \to \infty} \int E_zD(\xi,Z_{t_n+t}) \; \nu(d\xi).$$
Substituting~\eqref{eq_final_meta_theorem} into the last expression completes the proof. 

We now turn to the proof of~\eqref{eq_final_meta_theorem}. To do so, we apply Lemma~\ref{lemma_couopling_X+T}. Given~$\epsilon > 0$, there exists a time-shift coupling event~$A$ with probability larger than~$1-\epsilon$ and~$N \in \N$ such that for any~$n \ge N$ and~$\xi \in \mathcal C$,
$$ \big| E_z[D(\xi,Z_{t_n + t})] - E_z[D(\xi,Z_{t_n})] \big| = \E[ |D(\xi,Z_{t_n+t}) - D(\xi,Z_{t_n})| \cdot \mathds{1}_{A^c} ] \leq (2M\epsilon^\alpha)^\frac{1}{1+\alpha}, $$
where the last inequality follows from Hölder's inequality and assumption~$(ii)$. A final application of the triangle inequality gives~\eqref{eq_final_meta_theorem}.
\end{proof}

\subsection{Independent random walks}\label{ss_ind_rw_coupling}
In view of Lemma~\ref{lemma_unique_implies_ergo}, to establish ergodicity, it remains to show that~$\mu^\mathrm{dyn}$ is the unique stationary measure of the process. To this end, we construct in Section~\ref{ss_uniqueness} a coupling between the Markov chain~$Q_{\mathrm{crw}}$ in~\eqref{matrix_q_crw_dyn} and a related system of independent random walks. In this section, we introduce this auxiliary process and establish a couple of preliminary results that will be used in the proof of uniqueness.  \\

\noindent \textbf{The sign process and independent random walks on dynamical signed edges.} Fix~$k \in \N$. Given~$\mathbf x \in V^k$ and~$\mathbf i \in \{-1,+1\}^k$, we write~$\mathbf x^{i,y} \in V^k$ and~$\mathbf i^i \in \{-1,+1\}^k$ for the vectors obtained from~$\mathbf{x}$ and~$\mathbf{i}$, respectively, by replacing the~$i$-th coordinate of~$\mathbf{x}$ with~$y \in V$, and flipping the sign of the~$i$-th coordinate of~$\mathbf i$, that is, multiplying it by~$-1$.

We define the Markov chain~$(X^1_t,\dots,X^k_t,\mathsf i^1_t,\dots, \mathsf i^k_t,A_t, B_t)_{t \ge 0}$ with state space~$\mathcal S_k$ and matrix rate~$Q_\mathrm{irw} = Q_\mathrm{irw}(p,\mathsf v,k,Q)$, which is defined as follows. Let~$(\mathbf x,\mathbf i, A,B) \in \mathcal S_k$ and denote by~$x_i$ the~$i$-th coordinate of~$\mathbf x$. 
\begin{equation}\label{matrix_q_irw_dyn}
    \begin{array}{ll}
        Q_\mathrm{irw} \big( ( \mathbf x,\mathbf i,A,B), ( \mathbf x^{i,y},\mathbf i,A,B ) \big) = Q(x_i,y), & \{x_i,y\} \in A, \; y \in V;  \\[0.2cm]
        Q_\mathrm{irw} \big( ( \mathbf x,\mathbf i,A,B), ( \mathbf x^{i,y},\mathbf i,A \cup \{x_i,y\},B ) \big) = p \cdot Q(x_i,y), & \{x_i,y\} \in (A \cup B)^c, \; y \in V; \\[0.2cm] 
        Q_\mathrm{irw} \big( ( \mathbf x,\mathbf i,A,B), ( \mathbf x^{i,y},\mathbf i^i,A,B ) \big) = Q(x_i,y), & \{x_i,y\} \in B, \; y \in V;\\[0,2cm]
        Q_\mathrm{irw} \big( ( \mathbf x,\mathbf i,A,B), ( \mathbf x^{i, y},\mathbf i^i,A,B \cup \{x_i,y\} ) \big) = (1-p) \cdot Q(x_i,y), & \{x,y\} \in (A \cup B)^c, \; y \in V; \\[0.2cm]
        Q_\mathrm{irw} \big( ( \mathbf x,\mathbf i,A,B), ( \mathbf x,\mathbf i,A \setminus \{e\},B \setminus \{e\} ) \big) = \mathsf v, & e \in A \cup B.
    \end{array}
\end{equation}

We now state and prove the results. As we did earlier, we write~$\mathbf X_t := (X^1_t,\dots,X^k_t)$ and~$\mathbf{i}_t := (\mathsf i^1_t, \dots, \mathsf i^k_t)$ for any~$t \ge 0$.
\begin{lemma}\label{lemma_limit_1/2}
Fix~$k \in \N$. Then, for every~$\eta \in \Omega_\mathrm{site}$ and~$(\mathbf x, \mathbf i ,A,B) \in \mathcal S_k$ we have that
    \begin{equation}\label{eq_1_prof_ergo_limit_final}
        \lim_{t \to \infty} P_{\mathbf x,\mathbf i,A,B}( \eta(X^1_t) = \mathsf i^1_t, \dots, \eta(X^k_t) = \mathsf i^k_t \mid A_t,B_t, \;(\mathbf X_s)_{s \ge 0} ) = \frac{1}{2^k} \; \text{ a.s.}
    \end{equation}    
\end{lemma}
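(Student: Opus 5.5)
The plan is to expand the indicator in \eqref{eq_1_prof_ergo_limit_final} in Fourier form and show that every non-trivial character averages out. Write $\sigma^j_t := \eta(X^j_t)\,\mathsf i^j_t \in \{-1,+1\}$. Then
\[
\mathds 1\{\eta(X^j_t)=\mathsf i^j_t\ \forall j\} = \prod_{j=1}^k \frac{1+\sigma^j_t}{2} = 2^{-k}\sum_{S\subseteq\{1,\dots,k\}} \prod_{j\in S}\sigma^j_t .
\]
The term $S=\varnothing$ gives $2^{-k}$. It therefore suffices to show that, for each nonempty $S$,
\[
E_{\mathbf x,\mathbf i,A,B}\Big[\prod_{j\in S}\sigma^j_t \,\Big|\, A_t,B_t,(\mathbf X_s)_{s\ge0}\Big] \longrightarrow 0 \quad \text{a.s.}
\]

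\textbf{Step 1: a graphical construction.} I would build $Q_\mathrm{irw}$ from three independent ingredients:
\begin{itemize}
\item the walker trajectories $(\mathbf X_s)$, which are independent $Q$-walks, since no jump is ever rejected;
\item a family of rate-$\mathsf v$ ``forgetting'' clocks on the edges;
\item an i.i.d.\ family of $\pm1$ signs with $P(+1)=p$, one for each \emph{epoch}.
\end{itemize}
An epoch of an edge $e$ starts when a walker crosses $e$ while $e\notin A\cup B$, and it ends at the next ring of the clock of $e$. The rates $p\,Q(x_i,y)$ and $(1-p)Q(x_i,y)$ in \eqref{matrix_q_irw_dyn} factor precisely as ``jump according to $Q$, then reveal a fresh Bernoulli sign if the edge is unknown''. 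This is why the epoch signs can be taken independent of $\mathcal F:=\sigma(\text{paths},\text{clocks})$.

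In this construction, $\mathsf i^j_t = \mathsf i_j \prod_{\epsilon} s_\epsilon^{\,n_j(\epsilon,t)}$, where the product runs over epochs and $n_j(\epsilon,t)$ is the number of crossings of $\epsilon$ by walker $j$ up to time $t$. (Initial edges in $A\cup B$ form epochs with deterministic sign.) Hence $\prod_{j\in S}\sigma^j_t$ equals an $\mathcal F$-measurable sign times $\prod_\epsilon s_\epsilon^{n_S(\epsilon,t)}$, with $n_S=\sum_{j\in S}n_j$.

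\textbf{Step 2: conditioning on a finer $\sigma$-field.} Let $\mathcal H_t := \mathcal F \vee \sigma(\text{signs of epochs alive at } t)$. This $\sigma$-field contains $\sigma(A_t,B_t,(\mathbf X_s))$. Given $\mathcal H_t$, the signs of dead epochs are still i.i.d.\ with mean $2p-1$. Consequently
\[
\Big|E\Big[\prod_{j\in S}\sigma^j_t\,\Big|\,\mathcal H_t\Big]\Big| \le |2p-1|^{N_S(t)},
\]
where $N_S(t)$ is the number of dead epochs with $n_S(\epsilon,t)$ odd. The quantity $N_S(t)$ is $\mathcal F$-measurable and nondecreasing in $t$, because a dead epoch's parity is frozen.

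\textbf{Step 3: $N_S(t)\to\infty$ a.s.} By the birth--death domination used in the proof of Lemma~\ref{lemma_unique_implies_ergo}, $|A_t\cup B_t|$ returns to $0$ infinitely often. At each such return, with probability at least $c=c(k,\mathsf v)>0$ uniformly in the past, the following happens: the next event is a jump of some walker in $S$, necessarily across an unknown edge, and the clock of that edge rings before any walker next jumps. The bound $c$ comes from comparing the walkers' total jump rate $k$ with the forgetting rate $\mathsf v$. This event creates a dead epoch crossed exactly once by a walker in $S$. By the strong Markov property and Lévy's conditional Borel--Cantelli lemma, this happens infinitely often, so $N_S(t)\to\infty$ a.s. Since $p\in(0,1)$ gives $|2p-1|<1$, the bound in Step 2 tends to $0$.

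\textbf{Main obstacle: passing to the coarser $\sigma$-field.} The statement conditions on $\mathcal G_t=\sigma(A_t,B_t,(\mathbf X_s)_{s\ge0})$, which changes with $t$. The tower property gives
\[
|E[\,\cdot\,|\mathcal G_t]| \le E\big[|2p-1|^{N_S(t)}\,\big|\,\mathcal G_t\big],
\]
but conditional expectations along a non-filtration need not converge a.s. I would handle this as follows. Fix $T$ and use monotonicity to bound the right-hand side by $E[|2p-1|^{N_S(T)}\mid\mathcal G_t]$ for $t\ge T$. Then exploit that $(A_t,B_t)$ ranges over a countable set and is nearly independent of the clock history before $T$, once a renewal (empty $A\cup B$) has occurred after $T$. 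If that proves awkward, I would state and use the $\mathcal H_t$-version of \eqref{eq_1_prof_ergo_limit_final}, which is the form needed in the coupling argument anyway.
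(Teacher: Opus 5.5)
\textbf{The gap: which $\sigma$-field you condition on.} Your Steps 1--3 are correct, and they follow a different route from the paper. The paper picks out $\ell$ unknown edges crossed exactly once, uses $\Prob(\prod_{i\le \ell} Z_i = 1)\to 1/2$ together with conditional Borel--Cantelli on holding times, and only sketches $k\ge 2$. You instead expand in characters and get the explicit bound $|E[\prod_{j\in S}\sigma^j_t\mid\mathcal H_t]|\le |2p-1|^{N_S(t)}$. That bound is uniform in $\eta$, handles shared epochs through the parity of $n_S$, and gets $N_S(t)\to\infty$ from regenerations of $A_t\cup B_t$.

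However, this proves the limit for $\mathcal H_t$, while the lemma conditions on $\mathcal G_t=\sigma(A_t,B_t,(\mathbf X_s)_{s\ge 0})$, and your last paragraph does not bridge the two. Since $(\mathcal G_t)$ is not a filtration, $E[|2p-1|^{N_S(T)}\mid\mathcal G_t]$ does not vanish almost surely for free. The renewal heuristic is not yet an argument:
\begin{itemize}
\item the first time after $T$ at which $A\cup B$ is empty is not $\mathcal G_t$-measurable, and it itself depends on the clocks before $T$;
\item an estimate saying that $(A_t,B_t)$ is ``nearly independent'' of the early clocks controls the difference in $L^1$, not pathwise.
\end{itemize}
This missing step is exactly the paper's Fact~1 (asymptotic independence with the environment), which the paper asserts without proof. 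So you have not overlooked an argument the paper supplies, but as written the proposal does not prove the stated lemma.

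\textbf{One way to close it.} Set $Z_T:=|2p-1|^{N_S(T)}$.
\begin{itemize}
\item $Z_T$ depends only on $\mathbf X$ and on the clocks, restricted to $[0,T]$, of the finite set $F_T$ of edges that lie in $A_0\cup B_0$ or are crossed during $[0,T]$.
\item Given $\mathbf X$, the coordinates of $(A_t,B_t)$ are independent over edges, each a function of that edge's clock and signs. By Bayes' formula over the countably many values of $(A_t,B_t)$, $E[Z_T\mid\mathcal G_t]$ equals $E[Z_T\mid\mathbf X]$ reweighted by likelihood factors $L_e$, $e\in F_T$, each a function of the clock of $e$ on $[0,T]$.
\end{itemize}
Let $c(e,t)$ be the last crossing of $e$ before $t$ (or $0$ if there is none). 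Whether $e$ is known at $t$ depends only on its clock on $(c(e,t),t]$, and a non-initial epoch shows a fresh Bernoulli$(p)$ sign. This gives three cases:
\begin{itemize}
\item If $c(e,t)>T$ and $e\notin A_0\cup B_0$, then $L_e$ is constant.
\item Almost surely, for all large $t$, every $e\in F_T$ with $c(e,t)\le T$ is unknown at $t$. For such $e$, $\max L_e/\min L_e\le (1-e^{-\mathsf v(t-T)})^{-1}$.
\item If $e\in A_0\cup B_0$ and $c(e,t)>T$, the displayed sign is deterministic only if the initial epoch survived, and the ratio is $1+O(e^{-\mathsf v(c(e,t)-T)})$. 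This tends to $1$, because almost surely $e$ is either eventually unknown or $c(e,t)\to\infty$.
\end{itemize}
Hence $\limsup_t E[Z_T\mid\mathcal G_t]\le E[Z_T\mid\mathbf X]$ almost surely. Combined with your monotonicity reduction, letting $T\to\infty$ along the integers finishes the proof by conditional dominated convergence.

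\textbf{Your fallback.} It is also legitimate for the paper's purposes. In Corollary~\ref{cor_ergo_limit} the weight $\varphi_{\mathrm{edge}}(\zeta,A_t,B_t)\mathds 1_{\mathcal A}$ is $\mathcal H_t$-measurable. Your $\eta$-uniform bound, after integrating $\zeta$ against $\pi^{\mathrm{edge}}_p$, then gives \eqref{eq_2_prof_ergo_limit} directly by bounded convergence.
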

\begin{remark}
If we apply the bounded convergence theorem in~\eqref{eq_1_prof_ergo_limit_final}, we have that
$$ \lim_{t \to \infty} P_{\mathbf x,\mathbf i} \big( \eta(X^1_t) = \mathsf i^1_t, \dots, \eta(X^k_t) = \mathsf i^k_t \big) = \frac{1}{2^k} \quad \text{for any } \textbf{x} \in V^k \text{ and } \textbf{i} \in \{-1,+1\}^k $$
In the static scenario, independence of the processes~$(X^i_t,\mathsf i^i_t)$ implies that the last display is equivalent to
$$\lim_{t \to \infty}P_{x,\mathsf i}( \eta_t(x) = \mathsf i ) = 1/2, \quad \text{for any } x\in V \text{ and } \mathsf i \in \{-1,+1\},$$ 
which ensures ergodicity for the voter model with anti-voter bonds, as shown in Theorem~6.1 of~\cite{Gantertetall2005}. Obtaining an analogous result to the case of a random environment requires additional work, as the processes~$(X^i_t, \mathsf i^i_t)$ are no longer independent.
\end{remark}
\begin{proof}[Proof of Lemma \ref{lemma_limit_1/2}]

By the path conditioning~$(\textbf{X}_s)_{s \ge 0}$, we have that
$$ E_{\mathbf{x},\mathbf{i},A,B}[ \varphi_\mathrm{site}(\eta,\mathbf{X}_t,\mathbf{i}_t) \mid A_t,B_t,(\mathbf{X_s})_{s \ge 0} ] = \hspace{-3mm} \sum_{\mathbf j \in \{-1,+1\}^k } \hspace{-4mm}\varphi_\mathrm{site}( \eta,\mathbf{X}_t,\mathbf{j}) \cdot P_{\mathbf{x},\mathbf{i},A,B}( \mathbf i_t = \mathbf j \mid A_t,B_t,(\mathbf{X_s})_{s \ge 0} )  $$
So, to obtain~\eqref{eq_1_prof_ergo_limit_final} it suffices to prove that for any~$(\mathbf x, \mathbf i, A,B) \in \mathcal S_k$ and~$\mathbf j \in \{-1,+1\}^k$, 
$$ \lim_{t \to \infty} P_{\mathbf{x},\mathbf{i},A,B} ( \mathbf i_t = \mathbf j \mid A_t,B_t,(\mathbf{X_s})_{s \ge 0} ) = \frac{1}{2^k} \; \text{ a.s.}$$
First, let us focus on the case~$k = 1$. The extension to multiple entries follows similar arguments with minor modifications. We give some details at the end of the proof. Fix~$x \in V$ and~$\mathsf i \in \{-1,+1\}$, we aim to prove that
\begin{equation}\label{eq_1_prof_ergo_limit}
    \lim_{t \to \infty} P_{x,\mathsf i,A,B}( \mathsf i_t = +1 \mid A_t,B_t, \;(X_s)_{s \ge 0} ) = \frac{1}{2} \; \text{ a.s.}
\end{equation}    

We start by giving an alternative formulation for the sign process~$\mathsf i_t, \; t \ge 0$. Whenever the random walk crosses an unknown edge at time~$t$, that is~$\{X_{t^-},X_t\} \notin A_{t^-} \cup B_{t^-}$, we sample a random variable~$\xi$ independent of~$\sigma \big( X_s, \mathsf i_s, A_s, B_s : s \in [0,t) \big)$ with distribution
\begin{equation}\label{distribution_xi}
    p \cdot \delta_{\{+1\}} + (1-p)\cdot \delta_{\{-1\}}.
\end{equation}
This random variable is then used to update the value of the sign process and the environment process as follows:
    $$ \mathsf i_t = \mathsf i_{t^-} \cdot \xi  \quad \text{ and } \; \left\{ \begin{array}{ll}
        A_t = \{X_{t^-},X_t\} \cup A_{t^-} \text{ and } B_t = B_{t^-}  & \text{ if } \xi = 1; \\[0.2cm]
        B_t = \{X_{t^-},X_t\} \cup B_{t^-} \text{ and } A_t = A_{t^-}  & \text{ if } \xi = -1.
    \end{array} \right.$$
After time~$t$, the random walk may traverse the edge~$\{X_{t^-},X_t\}$ again before it is~\emph{refreshed}, that is, when the edge becomes unknown. In such cases, we do not resample a new random variable; instead, we reuse the same value of~$\xi$ to update the sign process. Since the time until the edge refreshes is exponentially distributed with rate~$\mathsf v$, the number of times it is crossed before refreshing is almost surely finite. This observation also implies that, for any initial environment~$(A_0,B_0)$, the random walk traverses infinitely many unknown edges along its trajectory.

We consider an increasing sequence of times~$\{t_i\}_{i \in \mathbb{N}}$ corresponding to the moments when the random walk traverses the unknown edges~$e_i$, at which point it draws the random variable~$\xi_i$ with distribution~\eqref{distribution_xi}. From the previous discussion, we have that for any~$t \ge 0$,    
$$\mathsf i_t = \mathsf i_0 \cdot \xi_0(t) \cdot \prod^{U_t}_{i =1} \xi_i^{n_i(t)},$$
where~$\xi_0(t) = \xi_0(A_0,B_0,t)$ denotes the update of~$\mathsf i_t$ given by the edges in~$A_0 \cup B_0$ that have been crossed by the random walk before their refreshing time in~$[0,t]$,~$U_t$ is the number of unknown edges traversed by the random walk up to time~$t$, and~$n_i(t)$ denotes the number of times the random walk traverses the edge~$e_i$ before refreshing in~$[t_i,t]$, including the traversal at time~$t_i$. To obtain~\eqref{eq_1_prof_ergo_limit}, we use the following facts: \\

\noindent \textbf{1. Asymptotic independence with the environment.} Given~$\ell \in \N$, for any function~$g:\{-1,+1\}^{\ell} \times \N^\ell \to \R$ we have that, conditioned on the entire trajectory of the random walk~$(X_s)_{s \ge 0}$,
    \begin{equation}\label{Eq_fact_1}
        \begin{aligned}
            \lim_{t \to \infty} E_{x,\mathsf i,A,B} \big[g  \big( & \xi_1 ,\dots,  \xi_\ell,n_1(t),\dots,n_\ell(t) \big)  \mid A_t,B_t \big] \\[2mm]
            & = E_{x,\mathsf i,A,B}\big[g \big( \xi_1,\dots,\xi_\ell,n_1(\infty),\dots,n_\ell(\infty)\big) \big], \quad \text{a.s.},
    \end{aligned}
    \end{equation}
    where~$n_i(\infty)$ denotes the total number of times the random walk traverses the edge~$e_i$ before refreshing after time~$t_i$, including the traversal at time~$t_i$. \\
    
\noindent \textbf{2. Infinitely many one-time crossings of unknown edges.}  Conditioning on the entire trajectory of the random walk~$(X_s)_{s \ge 0}$, we have that
    \begin{align*}
    P_{x,\mathsf i,A,B}( n_i(\infty) = 1 \mid (X_s)_{s \ge 0} ) & \ge P_{x,\mathsf i,A,B} \left(\left. \begin{array}{c}
    (X_s) \text{ traverses } e_i \text{ and this edge}  \\
    \text{refreshes before the next jump of } (X_s) 
    \end{array} \;\right|\; (X_s)_{s \ge 0} \right)  \\[2mm]
    & = 1 - \exp( - \mathsf v \cdot \tau_i ) \ge (1-\exp(-\mathsf{v})) \cdot \mathds{1}\{ \tau_i \ge 1 \},  
    \end{align*}
    where~$\tau_i$ is the holding time of the random walk after its crossing jump at time~$t_i$. Note that~$\{\tau_i: i \in \N\}$ is an i.i.d. sequence of exponential random variables of parameter~$1$. Then, by the second lemma of Borel-Cantelli, we can see that
    $$ \sum^\infty_{i =1} P_{x,\mathsf i,A,B}( n_i(\infty) = 1 \mid (X_s)_{s \ge 0}) \ge (1-\exp{(-\mathsf v)}) \cdot \sum^\infty_{i = 1} \mathds{1}\{ \tau_i \ge 1 \} = \infty \quad \text{a.s.}$$
    We now apply the conditional version of the Borel–Cantelli lemma, as stated in Lemma~3.3 of~\cite{Majerek05}. Since the random variables~$\{ n_i(\infty): i \in \N \}$ are independent conditionally on~$\sigma(X_s:s\ge 0)$, the lemma implies that
\begin{equation}\label{Eq_fact_2}
    P_{x,\mathsf i,A,B} \left(\left.\sum^\infty_{i = 1} \mathds{1}\{ n_i(\infty) = 1 \} = \infty \;\right| \;(X_s)_{s \ge 0} \right) = 1 \quad \text{a.s.}
\end{equation}

\noindent \textbf{3. Standard fact.} Let~$\{Z_i: i \in \N \}$ be an i.i.d. sequence of random variables with distribution~\eqref{distribution_xi}. We have that
\begin{equation}\label{Eq_fact_3}
    \lim_{\ell \to \infty} \Prob \left(\prod^\ell_{i=1}Z_i = 1 \right) = \frac{1}{2}.
\end{equation}

We now use these facts to complete the proof. Given~$\epsilon > 0$ we want to find~$T>0$ such that
\begin{equation}\label{eq_1_1_prof_ergo_limit}
    \left| P_{x,\mathsf i,A,B}( \mathsf i_t = 1 \mid A_t,B_t,(X_s)_{s \ge 0} ) - \frac{1}{2} \right| < \epsilon \quad \text{for all } t > T.
\end{equation}
Recall the increasing sequence~$t_i$ of revealing times where the random variables~$\xi_i$ were sampled. Let~$m_\ell$ be the smallest integer number such that~$\{\xi_i: i=1,\dots,m_\ell\}$ has~$\ell$ variables~$\xi_i$ satisfying~$n_i(\infty) = 1$. By~\eqref{Eq_fact_2}, for any~$\ell$, one can choose~$m_\ell$ with this property. We now apply the standard fact~\eqref{Eq_fact_3} to obtain~$\ell$ such that:   
$$ \big| P_{x,\mathsf i,A,B} \left( \Xi_\ell = 1 \mid (X_s)_{s \ge 0}  \right) - 1/2 \big| < \epsilon/2, \quad \text{where} \quad \Xi_\ell := \sum^{m_\ell}_{i=1,\; n_i(\infty)=1} \xi_i. $$
Moreover, it follows from~\eqref{Eq_fact_1} that there exists~$T_1>0$ such that
$$\big| P_{x,\mathsf i,A,B} \left( \Xi_\ell = 1 \mid A_t,B_t,(X_s)_{s \ge 0}  \right) - 1/2 \big| < \epsilon \quad \text{for all } t > T_1.$$
On the other hand, for any~$t > t_{m_\ell}$ we can write~$\mathsf i_t = \Xi_\ell \cdot \mathsf i^\ast_t$, where
$$ \mathsf i^\ast_t = \mathsf i_0 \cdot \xi_0(t)  \cdot \prod^{U_t}_{i = 1,\; n_i(\infty) \ge 2} \xi^{n_i(t)}_i $$
Note that~$\Xi_\ell$ and~$\mathsf i^\ast_t$ are conditionally independent for any~$t$. Then, for any~$t > \max\{ t_{m_\ell},T_1 \}$ we have~\eqref{eq_1_1_prof_ergo_limit} and thus we have~\eqref{eq_1_prof_ergo_limit}.

When considering several independent sign processes~$(\mathsf i^1_t),(\mathsf i^2_t),\dots,$ evolving on the same environment~$(A_t,B_t)$ only minor extensions are required. Each sign process can be constructed using the same formulation with independent families of random variables, each having distribution~\eqref{distribution_xi}. Since an extension of~\eqref{Eq_fact_1} remains valid for multiple coordinates corresponding to different processes, the same arguments applies and yield the result for several entries.
\end{proof}
As a consequence, we have the following result. Recall the definition of the duality function~$D$ from Lemma~\ref{lemma_duality_raw}.
\begin{corollary}\label{cor_ergo_limit}
    Fix~$k \in \N$. Then, for every~$(\mathbf{x},\mathsf i ,A,B) \in \mathcal S_k$, event~$\mathcal A \in \sigma( \mathbf X_s: s \ge 0)$ and stationary measure~$\mu$ for the voter model with dynamic anti-voter bonds, we have that
\begin{equation}\label{eq_2_prof_ergo_limit}
    \lim_{t \to \infty} \int E_{\mathbf x,\mathsf i,A,B}\big[ D(\eta,\zeta,\mathbf X_t,\mathbf i_t,A_t,B_t) \cdot \mathds{1}_\mathcal{A} \big] \; \mu \big(\mathrm{d}(\eta,\zeta)\big) = \frac{1}{2^k} \cdot P_{\mathbf x,\mathbf i,A,B}(\mathcal A).
\end{equation}    
Furthermore, for any~$(x,\mathsf i,A,B) \in \mathcal{S}_1$,
    \begin{equation}\label{eq_3_prof_ergo_limit}
        \mu\big( (\eta,\zeta): \eta(x) = \mathsf i, \zeta \equiv 1 \text{ on } A, \zeta \equiv -1 \text{ on } B \big) = \frac{1}{2} \cdot p^{|A|}(1-p)^{|B|}.
    \end{equation}
\end{corollary}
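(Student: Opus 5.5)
The plan is to use Fubini and the factorisation of $D$. I will condition on the edge state $(A_t,B_t)$ and the whole walk path $(\mathbf X_s)_{s\ge0}$, and reduce both claims to Lemma~\ref{lemma_limit_1/2} together with the stationarity of the edge marginal.

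First, the edge marginal of any stationary $\mu$ is $\pi^\mathrm{edge}_p$. The edge process $(\zeta_t)$ is autonomous: its dynamics do not depend on $\eta$. Hence the marginal of $\mu$ on $\Omega_\mathrm{edge}$ is stationary for dynamical percolation. Dynamical percolation with $p\in(0,1)$, $\mathsf v>0$ is a product of independent ergodic two-state chains, so this marginal equals $\pi^\mathrm{edge}_p$. This gives, for any finite disjoint $(A',B')$,
\[
\int \varphi_\mathrm{edge}(\zeta,A',B')\,\mu(\mathrm{d}(\eta,\zeta))=1 .
\]

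\textbf{Proof of \eqref{eq_2_prof_ergo_limit}.} Write $D=\varphi_\mathrm{site}(\eta,\mathbf X_t,\mathbf i_t)\,\varphi_\mathrm{edge}(\zeta,A_t,B_t)$, and let $\mathcal G_t=\sigma(A_t,B_t,(\mathbf X_s)_{s\ge0})$. Since $\varphi_\mathrm{edge}(\zeta,A_t,B_t)$ and $\mathds 1_\mathcal A$ are $\mathcal G_t$-measurable, conditioning gives
\[
E\big[D\,\mathds 1_\mathcal A\big]=E\big[\varphi_\mathrm{edge}(\zeta,A_t,B_t)\,\mathds 1_\mathcal A\, h_t(\eta)\big],\qquad h_t(\eta):=P(\eta(X^j_t)=\mathsf i^j_t\ \forall j\mid\mathcal G_t).
\]
Integrating in $\mu$ and applying Fubini, I would like to replace $h_t(\eta)$ by $2^{-k}$. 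The difficulty is that $h_t$ depends on $\eta$, which is jointly distributed with $\zeta$ under $\mu$, so the two integrals do not separate directly.

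The route around this is the proof of Lemma~\ref{lemma_limit_1/2} itself. There, $P(\mathbf i_t=\mathbf j\mid\mathcal G_t)\to2^{-k}$ a.s. for each $\mathbf j$, and $h_t(\eta)=\sum_{\mathbf j}\varphi_\mathrm{site}(\eta,\mathbf X_t,\mathbf j)\,P(\mathbf i_t=\mathbf j\mid\mathcal G_t)$. When the $X^j_t$ are distinct, exactly one $\mathbf j$ contributes, namely $\mathbf j=(\eta(X^1_t),\dots,\eta(X^k_t))$. So the error is bounded by
\[
|h_t(\eta)-2^{-k}|\le\epsilon_t:=\max_{\mathbf j}\big|P(\mathbf i_t=\mathbf j\mid\mathcal G_t)-2^{-k}\big|,
\]
which is $\mathcal G_t$-measurable, free of $\eta$, and tends to $0$ a.s. Coinciding positions are handled the same way, since $\varphi_\mathrm{site}$ then forces compatible $\mathbf j$'s; I would assume the lemma's proof covers this. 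The remaining error is at most
\[
\int E\big[\varphi_\mathrm{edge}(\zeta,A_t,B_t)\,\epsilon_t\big]\,\mu(\mathrm d(\eta,\zeta)).
\]
Because $\epsilon_t$ does not involve $\eta$, the $\mu$-integral only sees the edge marginal. Using the fact established first, this equals $E[\epsilon_t]$, up to the dependence of $\epsilon_t$ on $(A_t,B_t)$. To be careful I would instead bound by Hölder: write $\int\varphi_\mathrm{edge}\,d\pi_p^\mathrm{edge}$-weighted terms as $E[\epsilon_t\cdot 1]$ after integrating $\zeta$ first, which is legitimate since $\epsilon_t$ is independent of $\zeta$. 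Then $E[\epsilon_t]\to0$ by bounded convergence. The main term is $2^{-k}E[\mathds 1_\mathcal A\int\varphi_\mathrm{edge}(\zeta,A_t,B_t)\,d\pi_p^\mathrm{edge}]=2^{-k}P(\mathcal A)$. This yields \eqref{eq_2_prof_ergo_limit}.

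\textbf{Proof of \eqref{eq_3_prof_ergo_limit}.} For $k=1$ the coalescing and independent dual chains coincide, so \eqref{duality_equality} applies with $(X_t)$. Integrating against the stationary $\mu$ shows the left side of \eqref{eq_3_prof_ergo_limit} equals, for every $t$,
\[
p^{|A|}(1-p)^{|B|}\int E_{x,\mathsf i,A,B}[D(\eta,\zeta,X_t,\mathsf i_t,A_t,B_t)]\,\mu(\mathrm d(\eta,\zeta)).
\]
Letting $t\to\infty$ and applying \eqref{eq_2_prof_ergo_limit} with $\mathcal A$ the whole space gives the value $\tfrac12\,p^{|A|}(1-p)^{|B|}$.

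The main obstacle is the decoupling step in the proof of \eqref{eq_2_prof_ergo_limit}: $\eta$ and $\zeta$ are correlated under $\mu$, and the conditional limit must be shown to be uniform in $\eta$. This is handled by the $\eta$-free bound $\epsilon_t$ together with the fact that $\mu$ has edge marginal $\pi^\mathrm{edge}_p$.
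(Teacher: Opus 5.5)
Your proposal is correct and follows the same route as the paper: factor $D$, condition on $\sigma(A_t,B_t,(\mathbf X_s)_{s\ge0})$, invoke Lemma~\ref{lemma_limit_1/2}, use that the edge marginal of $\mu$ is $\pi^\mathrm{edge}_p$ together with Tonelli, and deduce \eqref{eq_3_prof_ergo_limit} from duality and stationarity for $k=1$. Your $\eta$-free bound $\epsilon_t$, followed by integrating out $\zeta$ to get $E[\epsilon_t]$ and applying bounded convergence, justifies the limit more cleanly than the paper's deterministic ``there exists $T$'', which a.s.\ convergence alone does not give. Your worries about H\"older and about coinciding positions are unnecessary, since $h_t(\eta)=P(\mathbf i_t=(\eta(X^1_t),\dots,\eta(X^k_t))\mid\mathcal G_t)$ holds in all cases.
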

\begin{proof}
Fix~$(\mathbf x, \mathbf i ,A,B) \in \mathcal S_k$. We denote by~$\E$ the expectation operator~$E_{\mathbf x,\mathbf i,A,B}$. 

By the definition of~$D$ and the triangle inequality, for any~$t > 0$ we have
\begin{align*}
\Big| \int \E& \big[ D(\eta,\mathbf X_t, \mathbf i_t,\zeta,A_t,B_t) \cdot \mathds{1}_\mathcal{A} \big] \mu \big(\mathrm{d}(\eta,\zeta) \big) - \frac{1}{2^k} \cdot \int \E \left[ \varphi_\mathrm{edge}(\zeta,A_t,B_t) \cdot \mathds{1}_\mathcal{A} \right] \mu \big(\mathrm{d}(\eta,\zeta) \big) \Big|\\[2mm]
& \leq \int \E\Big[ \varphi_\mathrm{edge}(\zeta,A_t,B_t) \cdot \mathds{1}_\mathcal{A} \cdot \Big| \E \big[ \varphi_\mathrm{site}(\eta,\mathbf X_t,\mathbf i_t) \mid A_t,B_t, (\mathbf X_s)_{s \ge 0} \big] - \frac{1}{2^k} \Big| \Big] \; \mu \big(\mathrm{d}(\eta,\zeta) \big). 
\end{align*}
By~\eqref{eq_1_prof_ergo_limit_final},  given~$\epsilon > 0$, there exists~$T$ such that for any~$t \ge T$, the right-hand side above is smaller than
\[
\epsilon \cdot \int \E\big[ \varphi_\mathrm{edge}(\zeta,A_t,B_t) \cdot \mathds{1}_\mathcal{A} \big] \; \mu \big(\mathrm{d}(\eta,\zeta) \big).
\]

From the dynamics of the model, the edge marginal of any stationary measure~$\mu$ is~$\pi^\mathrm{edge}_p$. Hence, applying Tonelli's theorem, we have that 
$$\int \E\big[ \varphi_\mathrm{edge}(\zeta,A_t,B_t) \cdot \mathds{1}_\mathcal{A} \big] \; \mu \big(\mathrm{d}(\eta,\zeta) \big) = P_{\mathbf x,\mathbf i,A,B}(\mathcal A) \quad \text{for any } t \ge 0.$$
To obtain~\eqref{eq_3_prof_ergo_limit} we use the stationarity of~$\mu$ together with the duality relation~\eqref{duality_equality}, which together imply that~$\mu( (\eta,\zeta):\eta(x) = \mathsf i, \zeta  \equiv 1  \text{ on } A,  \zeta \equiv -1  \text{ on } B )$ equals
$$ p^{|A|}(1-p)^{|B|}\cdot \int E_{x,\mathsf i,A,B} \big[ D(\eta,Y^1_t, \mathsf i^1_t,\zeta,A_t,B_t) \big] \; \mu(\mathrm{d}(\eta,\zeta)) \quad \text{for any } t \ge 0.$$
We note that~$(Y^1_s,\mathsf i^1_s,A_s,B_s)_{s \ge 0}$ has the same distribution as the independent random walk~\eqref{matrix_q_irw_dyn} for~$k=1$. Therefore, applying~\eqref{eq_2_prof_ergo_limit} we obtain the desired result.
\end{proof}

\subsection{Uniqueness}\label{ss_uniqueness}
In this section, we complete the proof of ergodicity by establishing the uniqueness of the stationary measure~$\mu^\mathrm{dyn}$. 
\begin{lemma}\label{lemma_uniqueness}
The voter model with dynamic anti-voter bonds, defined on an arbitrary graph~$G$, with adoption rates given by a~$Q$-matrix satisfying~\eqref{Q_def_adoption_rw} and with environment parameters~$p \in (0,1)$ and~$\mathsf v \in (0,\infty)$, admits a unique stationary measure.
\end{lemma}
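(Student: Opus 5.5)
Let $\mu$ be an arbitrary stationary measure. We will show that $\mu$ agrees with $\mu^\mathrm{dyn}$ on all cylinder events of the form in \eqref{mu_dyn_def}. By inclusion–exclusion these events (with $\mathbf i=(1,\dots,1)$, finite $A,B$) determine the measure, so $\mu=\mu^\mathrm{dyn}$.

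Fix $k$, $\mathbf x$, $\mathbf i=(1,\dots,1)$ and $(A,B)$. Stationarity of $\mu$ together with the duality \eqref{duality_equality} gives, for every $t\ge0$,
\[
\mu(\eta(x_j)=1\ \forall j,\ \zeta\equiv1 \text{ on } A,\ \zeta\equiv-1\text{ on } B)=p^{|A|}(1-p)^{|B|}\int E_{\mathbf x,\mathbf i,A,B}[D(\eta,\zeta,\mathbf Y_t,\mathbf i_t,A_t,B_t)]\,\mu(\mathrm d(\eta,\zeta)).
\]
The task is therefore to compute the limit as $t\to\infty$ of the integral on the right and to show that it equals $E[2^{-|\{\mathcal C_1,\dots,\mathcal C_k\}|}\prod_n\mathds 1_{I_n}]$.

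We do this with a coupling between the coalescing system $Q_\mathrm{crw}$ and the independent system $Q_\mathrm{irw}$.
\begin{enumerate}
\item Fix $\epsilon>0$ and choose $T$ so large that, with probability at least $1-\epsilon$, every pair of walkers that ever coalesce has done so before $T$. Coalescence, when it occurs, happens at a finite time, so such a $T$ exists.
\item Run the coalescing dual up to time $T$. At time $T$ the configuration $(\mathbf Y_T,\mathbf i_T,A_T,B_T)$ consists of $m$ distinct positions, one per class of the partition induced up to $T$, with signs synchronized or not.
\item From time $T$ on, let one representative per class move as an independent system $Q_\mathrm{irw}$ with $m$ walkers, started from this state.
\item Couple this independent system with the continued coalescing system until the first time two representatives meet. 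On the event of probability at least $1-\epsilon$ that no new coalescence occurs, the two systems agree forever.
\end{enumerate}

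On that good event, the duality integrand factorizes. A class whose signs are unsynchronized contributes $0$ by Remark~\ref{rmk_coalescing_synchronization}. A synchronized class reduces to a single coordinate. Conditioning on $\mathcal F_T$ and applying the Markov property at time $T$, Corollary~\ref{cor_ergo_limit}~\eqref{eq_2_prof_ergo_limit} (with $m$ walkers and the event $\mathcal A$ that the independent walkers never meet, which lies in $\sigma(\mathbf X_s)$) gives the limit $2^{-m}P(\mathcal A)$, up to an error controlled by $\epsilon$.

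The main obstacle is controlling the contribution of the bad events, since $D$ is unbounded through the factor $\varphi_\mathrm{edge}$. To handle it we do not bound $D$ pointwise. Instead we bound $\int E[D\,\mathds 1_{\text{bad}}]\,d\mu\le\int E[\varphi_\mathrm{edge}(\zeta,A_t,B_t)\mathds 1_{\text{bad}}]\,d\mu$. The edge marginal of $\mu$ is $\pi^\mathrm{edge}_p$, and Tonelli applies as in the proof of Corollary~\ref{cor_ergo_limit}, so this equals $P(\text{bad})\le 2\epsilon$, provided the bad event is measurable with respect to the walker paths and not the environment. We will arrange this by defining the bad events (late coalescence, or independent walkers meeting after $T$) purely in terms of the paths $(\mathbf Y_s)$, $(\mathbf X_s)$. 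The coupling in step 4 then has to be built so that the environment process is shared and the path law is unaffected.

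Letting $t\to\infty$ and then $\epsilon\to0$ yields exactly the right-hand side of \eqref{mu_dyn_def}. Hence $\mu=\mu^\mathrm{dyn}$. For $k=1$ this is already \eqref{eq_3_prof_ergo_limit}, which serves as a consistency check.
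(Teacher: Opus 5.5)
Your argument is correct, but it is organized differently from the paper's. The paper argues by induction on $k$ and compares two arbitrary stationary measures $\mu_1,\mu_2$. It couples the coalescing and independent systems only up to the first collision time $\tau$. Corollary~\ref{cor_ergo_limit} then shows that the contribution of $\{t<\tau\}$ is asymptotically the same under both measures. The contribution of $\{t\ge\tau\}$ is handled entirely by the induction hypothesis: there are at most $k-1$ distinct sites, or the integrand vanishes when the signs fail to synchronize. So the paper never needs the eventual coalescence structure, and it identifies the limit only indirectly, through the known stationarity of $\mu^{\mathrm{dyn}}$.

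You avoid induction instead. You wait until a deterministic time $T$ after which, with probability at least $1-\epsilon$, no further coalescence occurs. You then run the coupling from $T$ on the surviving representatives and apply Corollary~\ref{cor_ergo_limit} with $m$ walkers and the fixed event that they never meet. This shows directly that every stationary measure satisfies \eqref{mu_dyn_def}.

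What your route buys:
\begin{itemize}
\item an explicit identification of the unique stationary measure;
\item a cleaner use of the corollary, which is stated for a fixed event $\mathcal A$. The paper applies it to the $t$-dependent event $\{t<\tau\}$, which is harmless but needs an extra truncation to $\{\tau=\infty\}$.
\end{itemize}
The price is the conditioning on $\mathcal F_T$. You need the Markov property plus dominated convergence over the countably many possible states at time $T$. This is justified by the bound $\int E_s[D\,\mathds 1_{\mathcal A}]\,\mathrm d\mu\le 1$, which comes from the same Tonelli computation; you should state it explicitly.

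Your proviso that the bad events be measurable with respect to the walker paths is unnecessary for the error bound. Since $\int\varphi_{\mathrm{edge}}(\zeta,A',B')\,\pi^{\mathrm{edge}}_p(\mathrm d\zeta)=1$ for every fixed $(A',B')$, Tonelli gives $\int E[\varphi_{\mathrm{edge}}(\zeta,A_t,B_t)\mathds 1_F]\,\mathrm d\mu=P(F)$ for any event $F$ on the dual probability space. Path-measurability is needed only in the main term, where the corollary is invoked.
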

\begin{proof}[Proof of Theorem~\ref{thm_main_ergo}] It readily follows from the previous lemma and Lemma~\ref{lemma_unique_implies_ergo}.
\end{proof}
\begin{proof}[Proof of Lemma~\ref{lemma_uniqueness}]
Let~$\mu_1,\mu_2$ be stationary measures for the process. To obtain that~$\mu_1 = \mu_2$, it is sufficient to show that these measures coincide on sets of the form
$$\{ (\eta,\zeta): \eta(x_1) = \mathsf i_1,\dots,\eta(x_k)=\mathsf i_k, \zeta \equiv +1  \text{ on } A, \zeta \equiv -1  \text{ on } B \} $$
for any~$k \in \N$ and any~$(\mathbf x,\mathbf i,A,B) \in \mathcal S_k$. We proceed by induction on~$k$. Equation~\eqref{eq_3_prof_ergo_limit} of Corollary~\ref{cor_ergo_limit} shows that the statement holds for~$k=1$.

Assuming the statement holds for~$1,\dots,k-1$, it remains to prove its validity for~$k$. To this end, we construct a coupling between independent and coalescing random walks evolving in the same dynamical percolation environment.

We begin by defining on a common probability space~$\Prob$ the process
$$(X^1_t, \dots,X^k_t,\mathsf j^1_t,\dots,\mathsf j^k_t,\mathsf A_t,\mathsf B_t)_{t \ge 0}$$ 
with matrix rate~$Q_\mathrm{irw}(p,\mathsf v,k,Q)$ from~\eqref{matrix_q_irw_dyn}. Our goal is to construct, on the same space, a second process with matrix rate~$Q_\mathrm{crw}(p,\mathsf v,k,Q)$ from~\eqref{matrix_q_crw_dyn}, such that the two processes coincide up to the stopping time
$$ \tau := \inf\{ t \ge 0: |\{X^1_t,\dots,X^k_t\}| = k-1 \},$$
that is, the first time a collision occurs among the independent random walks. 

Let~$m \neq n$ be the indices such that~$X^m_{\tau^-} \neq X^m_{\tau} = X^n_\tau$, that is, at time~$\tau$, walker~$m$ jumps onto the position of walker~$n$. We also consider the process 
$$(\tilde Y^1_s,\dots,\tilde Y^{m-1}_s,\tilde Y^{m+1}_s, \dots \tilde Y^{k}_s, \tilde{\mathsf i}^1_s,\dots, \tilde{\mathsf i}^{m-1}_s, \tilde{\mathsf i}^{m+1}_s,\dots, \tilde{\mathsf i}^k_s,\tilde A_s, \tilde B_s)_{s \ge 0}$$ 
with matrix rate~$Q_\mathrm{crw}(p,\mathsf v,k-1,Q)$ started from
$$(X^1_{\tau}, \dots, X^{m-1}_{\tau}, X^{m+1}_{\tau}, \dots, X^k_{\tau}, \mathsf i^1_{\tau}, \dots, \mathsf i^{m-1}_{\tau}, \mathsf i^{m+1}_{\tau},\dots, \mathsf i^k_{\tau}, \mathsf A_{\tau}, \mathsf B_{\tau}).$$

We now construct the desired process~$(Y^1_t,\dots,Y^k_t,\mathsf i^1_t,\dots, \mathsf i^k_t,A_t,B_t)_{t \ge 0}$ as follows: 
\begin{itemize}
    \item For~$t \in [0,\tau)$, we set
    $$  Y^i_t := X^i_t, \; \mathsf i^i_t := \mathsf j^i_t \; \text{ for } i \in \{ 1,\dots,k\} \quad \text{and} \quad A_t := \mathsf A_t,\; B_t := \mathsf B_t.$$
    \item For~$t \in [\tau,\infty)$, we set
    $$ \left\{ \begin{array}{l}
         Y^i_t := \tilde Y^i_{t - \tau}, \; \mathsf i^i_t := \tilde{\mathsf i}^i_{t - \tau} \; \text{ for } i \in \{1,\dots,k\} \setminus \{m\}; \\[2mm]
         Y^m_t := \tilde Y^n_{t-\tau}, \; \mathsf i^m_t := \mathsf j^m_{\tau} \cdot (\tilde{\mathsf i}^n_0)^{-1} \cdot  \tilde{\mathsf i}^n_{t-\tau}   
    \end{array} \right. \text{ and } \; A_t := \tilde A_{t-\tau}, B_t := \tilde B_{t - \tau}. $$
    In the definition, we use the factor~$\mathsf j^m_\tau \cdot (\tilde{\mathsf i}_0)^{-1}$ to ensure that the sign process~$(\mathsf i^m_t)$ evolves according to the same dynamics as the sign process of~$(Y^n_t)$, but started from~$\mathsf j^m_\tau$. If~$\mathsf j^m_\tau \cdot (\tilde{\mathsf i}_0)^{-1} = 1$ the sign processes have synchronized; see Remark~\ref{rmk_coalescing_synchronization}.
\end{itemize}
Then, it is easy to convince oneself that this process has~$Q_\mathrm{crw}$ as its matrix rate for~$k$ random walks and it agrees with~$(\mathbf X_t,\mathbf j, \mathsf A_t, \mathsf B_t)$ up to time~$\tau$. 

Using the stationarity of~$\mu_1$ and the duality relationship~\eqref{duality_equality}, we have that for any~$(\mathbf x, \mathbf i,A,B) \in \mathcal S_k$ and any time~$t \ge 0$, 
\begin{align*}
\hat{\mu}_1(\mathbf x, \mathbf{i},A,B) :&= \mu_1(\eta(x_1)=\mathsf i_1, \dots, \eta(x_k)=\mathsf i_k, \zeta \equiv +1 \text{ on }  A,\zeta \equiv -1 \text{ on } B ) \\[2mm]
& = E_{\mathbf x, \mathbf i, A,B}\Big[\int D(\eta,\zeta,\mathbf Y_t, \mathbf i_t, A_t, B_t) \; \mu_1\big( \mathrm{d}(\eta,\zeta) \big) \Big] \cdot p^{|A|}(1-p)^{|B|}.    
\end{align*}
The analogous equality holds for the stationary measure~$\mu_2$. Then, for any~$t \ge 0$,
\begin{equation}\label{mu_1_m_2_upper_bound}
\big| \hat{\mu}_1( \mathbf x, \mathbf i,A,B) - \hat{\mu}_2( \mathbf x, \mathbf i,A,B) \big| \cdot p^{-|A|}(1-p)^{-|B|} \leq  |\Psi_1(t)| + |\Psi_2(t)|,
\end{equation}
where
\begin{align*}
& \Psi_1(t) = E_{\mathbf x, \mathbf i, A,B}\Big[ \Big( \int D(\eta,\zeta,\mathbf X_t, \mathbf j_t,\mathsf A_t,\mathsf B_t) \; \mathrm{d}\mu_1 - \int D(\eta,\zeta,\mathbf X_t, \mathbf j_t,\mathsf A_t,\mathsf B_t) \; \mathrm{d} \mu_2 \Big) \cdot \mathds{1}\{t < \tau \} \Big]; \\[2mm]
& \Psi_2(t) = E_{\mathbf x, \mathbf i, A,B}\Big[ \Big( \int D(\eta,\zeta,\mathbf Y_t, \mathbf i_t,A_t,B_t) \; \mathrm{d}\mu_1 - \int D(\eta,\zeta,\mathbf Y_t, \mathbf i_t, A_t, B_t) \; \mathrm{d}\mu_2 \Big) \cdot \mathds{1}\{t \ge \tau \} \Big].
\end{align*}
Observe that~$\{t < \tau\} \in \sigma( \mathbf X_s : s \ge 0 )$. Thus,~\eqref{eq_2_prof_ergo_limit} from Corollary~\ref{cor_ergo_limit} implies that~$\Psi_1(t) \to 0$ as~$t \to \infty$. 

On the other hand, note that on the event~$\{t \ge \tau\}$, we have~$|\{Y^1_t,\dots,Y^k_t\}| \leq k - 1$. Then,
\begin{align*}
   \int D(\eta,\zeta, \mathbf{Y}_t, \mathbf{i}_t,A_t,B_t) \; \mu_1 \big( \mathrm{d}(\eta,\zeta) \big) & \stackrel{\eqref{psi_varphi_def}} = p^{|A_t|}(1-p)^{|B_t|} \cdot \hat{\mu}_1( \mathbf Y_t, \mathbf i_t, A_t,B_t) \\[2mm]
   & \hspace{1mm} \stackrel{ (\ast) } = p^{|A_t|}(1-p)^{|B_t|} \cdot \hat{\mu}_2( \mathbf Y_t, \mathbf i_t, A_t,B_t) \\[2mm]
   & \stackrel{\eqref{psi_varphi_def}} = \int D(\eta,\zeta, \mathbf{Y}_t, \mathbf{i}_t,A_t,B_t) \; \mu_2\big( \mathrm{d}(\eta,\zeta) \big),
\end{align*}
where~$(\ast)$ holds by the inductive step. If the coalescing walkers do not synchronize; see Remark~\ref{rmk_coalescing_synchronization}, then both sides of~$(\ast)$ vanish, so the equality still holds. Hence,~$\Psi_2(t) = 0$. Plugging these into~\eqref{mu_1_m_2_upper_bound} and letting~$t \to \infty$ completes the inductive step and the proof.
\end{proof}

\noindent \textbf{Acknowledgments.} J.A. was partially supported by FAPESP (grants 2023/13453-5 and 2025/02707-1).

\bibliographystyle{alpha}
\bibliography{Ref.bib}

@article{PeresStaufferSteif2015,
  title={Random walks on dynamical percolation: mixing times, mean squared displacement and hitting times},
  author={Peres, Y. and Stauffer, A. and Steif, J.},
  journal={Probab. Theory Relat. Fields},
  volume={162},
  pages={487–530},
  year={2015}
}

@article{Swart2017,
    author = {Swart, J.},
    title = {A Course in Interacting Particle Systems},
    journal = {arXiv preprint arXiv:1703.10007v4},
    year ={2022} 
}

@article{CliffordSudbury73,
    author = {Clifford, P. and Sudbury A.},
    title = {A model for spatial conflict},
    journal = {Biometrika},
    volume = {60},
    pages = {581-588},
    year = {1973}
}

@book{Liggett2005,
  title={Interacting Particle Systems. Grundlehren der Mathematischen Wissenschaften [Fundamental Principles of Mathematical Sciences]},
  author={Liggett, T. M.},
  year={1985},
  publisher={Springer, New York}
}

@book{Liggett2010,
  title={Continuous time Markov processes : an introduction},
  author={Liggett, T. M.},
  year={2010},
  publisher={ American Mathematical Society}
}

@article{HolleyLiggett75,
    author ={Holley, R. and Liggett, T. M.},
    title = {Ergodic theorems for weakly interacting systems and the voter model},
    journal = {Annals of Probability},
    volume = {3},
    pages = {643-663},
    year = {1975}
}

@article{Andresetall,
author = {Andres, S. and Gantert, N. and Schmid, D. and Sousi, P.},
title = {{Biased random walk on dynamical percolation}},
volume = {52},
journal = {The Annals of Probability},
number = {6},
publisher = {Institute of Mathematical Statistics},
pages = {2051-2078},
year = {2024},
doi = {10.1214/23-AOP1679},
URL = {https://doi.org/10.1214/23-AOP1679}
}

@article{MaillardMountford2013,
author = {Maillard, G. and Mountford, T. S.},
journal = {Annales de l'I.H.P. Probabilités et statistiques},
number = {1},
pages = {13-35},
title = {Ergodic behaviour of “signed voter models”},
volume = {49},
year = {2013},
}

@article{Gantertetall2005,
     author = {Gantert, N. and Löwe, M. and Steif, J.},
     title = {The voter model with anti-voter bonds},
     journal = {Annales de l'I.H.P. Probabilit\'es et statistiques},
     pages = {767-780},
     publisher = {Elsevier},
     volume = {41},
     number = {4},
     year = {2005},
     doi = {10.1016/j.anihpb.2004.03.007},
     mrnumber = {2144233},
     zbl = {1070.60087},
     language = {en},
     url = {http://www.numdam.org/articles/10.1016/j.anihpb.2004.03.007/}
}

@article{Matloff1977,
author = {Matloff, N.},
title = {{Ergodicity Conditions for a Dissonant Voting Model}},
volume = {5},
journal = {The Annals of Probability},
number = {3},
publisher = {Institute of Mathematical Statistics},
pages = {371 - 386},
year = {1977},
doi = {10.1214/aop/1176995798},
URL = {https://doi.org/10.1214/aop/1176995798}
}

@article{Saada95,
    author = {Saada, E.},
    title = {Un modèle du votant en milieu aléatoire},
    journal = {Annales de l'I.H.P. Probabilités et statistiques},
    volume = {31},
    page = {263-271},
    year = {1995}
}

@article{Majerek05,
author = {Majerek, D. and Nowak, W. and Ziba, W.},
year = {2005},
month = {01},
pages = {},
title = {Conditional strong law of large number},
volume = {20},
journal = {International Journal of Pure and Applied Mathematics}
}

@article{Astoquillca26,
    author = {Astoquillca, J.},
    title = { {On the Stationary Measures of Two Variants of the Voter Model} },
    journal = {Journal of Theoretical Probability},
    year = {2026},
    volume = {39},
    number = {34},
}

@article{Mountford93, 
    title={A coupling of finite particle systems}, volume={30}, 
    number={1}, 
    journal={Journal of Applied Probability}, 
    author={Mountford, T. S.}, 
    year={1993}, 
    pages={258–262},
}

\end{document}